\title{On Connectivity of the Facet Graphs of Simplicial Complexes}
\author{Ilan I. Newman\thanks{
Department of Computer Science, University of Haifa, Haifa, Israel. Email: {\tt ilan@cs.haifa.ac.il}. This Research was supported by The Israel Science Foundation (grant number 862/10.)}
\and Yuri Rabinovich\thanks{
Department of Computer Science, University of Haifa, Haifa,
Israel. Email: {\tt yuri@cs.haifa.ac.il}. This Research was supported
by The Israel Science Foundation (grant number 862/10.). Part of this
research was done while this author visited Mittag-Leffler Institute, Stockholm}
}
\newcommand{\ignore}[1]{}
\def \qed {\hspace*{0pt} \hfill {\quad \vrule height 1ex width 1ex depth 0pt}
 \medskip}
\newenvironment{proof}{\par\noindent{\bf Proof.}\quad}{  $\qed$ \newline}
\newcommand{\R}{\ensuremath{\mathbb R}}
\newcommand{\N}{\ensuremath{\mathbb N}}
\newcommand{\F}{\ensuremath{\mathbb F}}
\newtheorem{theorem}{Theorem}[section]
\newtheorem{definition}{Definition}[section]
\newtheorem{claim}{Claim}[section]
\newtheorem{remark}{Remark}[section]
\newtheorem{lemma}{Lemma}[section]
\newtheorem{corollary}{Corollary}[section]
\def \P {\mathcal P}
\def \C {\mathcal C}
\def \K {\mathcal S}
\def \link {{\rm link}}
\def \sign {{\rm sign}}
\def \rank {{\rm rank}}
\def \n3 {n \choose 3}
\def \without {\!\setminus\!}
\def \supp {{\rm Supp}}
\def \capp {{\rm Cap}}
\def \Im {{\rm Im}}
\begin{document}
\maketitle
\vspace{-1cm}
\begin{abstract}
The paper studies the connectivity properties of facet graphs of simplicial complexes of combinatorial 
interest. In particular, it is shown that the facet graphs of $d$-cycles, \,$d$-hypertrees and \,$d$-hypercuts are,
respectively, $(d+1)$-\,,\;$d$-\, and \,$(n-d-1)$-vertex-connected. It is also shown that the facet
graph of a $d$-cycle cannot be split into more than $s$ connected
components by removing at most $s$ vertices.  
In addition, the paper discusses various related issues, as well as an extension to cell-complexes. 
\end{abstract}

\pagenumbering{arabic}
\section{Introduction}
Graphs of convex polytopes have been studied for many decades, starting with the classical Steinitz characterization 
of the graphs of 3-polytopes~\cite{grunbaum}, experiencing a bust with the advance of the 
Simplex Method for Linear Programming, and continuing to draw a research effort in the modern era.
See, e.g., the books~\cite{grunbaum, ziegler}, and the survey~\cite{kalai} for many related results and open 
problems. One of the more famous results in the area is  Balinski Theorem~\cite{balin} from 1961, claiming that  
the graph (i.e., the $1$-skeleton) of a $d$-polytope is $(d+1)$-vertex connected. This theorem and its various 
geometrical, topological and algebraic extensions have received a considerable attention, see 
e.g.,~\cite{barn1, barn2, BjSlides, athanathos} for a very partial list of old and new related results. It has been extended 
to simple $d$-cycles in simplicial complexes in l~\cite{erannevo}, where it is shown that the 
geometric realization of such graphs in $\R^{d+1}$ is generically rigid. See also the very recent~\cite{adi}
for an algebraic treatment of graphs of simple $d$-cycles.  

In this paper we study the connectivity properties of {\em facet} graphs of simplicial complexes and, more generally, 
of cell complexes. That is, the facet graph $G_d(K)$ of a $d$-complex $K$, has a vertex for every $d$-face of $K$,
and two such vertices are connected by an edge if the corresponding faces share a $(d-1)$-face.
Since the facet graph of a convex $d$-polytope $P$ is isomorphic to the graph of its dual polytope $P^*$, the facet
graphs of convex polytopes do not require a separate study. This is not the case for simplicial complexes, where graphs
and facet graphs differ significantly. For example, it is folklore
that if a pure $d$-complex $K$ is {\em strongly connected}, i.e., its facet graph $G_d(K)$ is connected, then 
the graph of $K$ is $d$-connected. Obviously, this implication cannot be reversed, and connectivity of the graph of $K$ 
implies nothing about the connectivity of its facet graph. For another example, observe that while the graph of
a simple simplicial cycle is generically rigid by~\cite{erannevo}, its facet graph does not have to be such, 
as demonstrated by the boundary of the cross-polytope.

Motivated on one hand by the classical results about graphs of convex polytopes, and on the other hand
by the emerging combinatorial theory of simplicial complexes (see, e.g.,~\cite{lnpr} and the references therein), we study the facet graphs of the basic objects of this theory: simple $d$-cycles, $d$-hypertrees and $d$-hypercuts.
These are the higher dimensional analogues of simple cycles, spanning trees and cuts in the complete graph $K_n$.

The paper proceeds as follows. We start with building our tools, and show that a simplicial complex 
induced by at most $d$ simplices of dimension at most $d$, collapses to its $(d-2)$-skeleton. This lemma
will be generalized in the last section, and one of its variants will be shown to be equivalent
to the Homological Mixed Connectivity Theorem~\cite{Floy,BjSlides}, an elegant topological generalization of Balinski Theorem to
higher-dimensional skeletons. We shall also discuss the duality of simple cycles and hypercuts in the complete 
simplicial complex on $n$ vertices.

Next, we address the connectivity of the facet graphs of the basic combinatorial-topological 
objects. It comes, perhaps, as a little surprise that the facet graph of a simple $d$-cycle is $(d+1)$-connected.
The facet graph of a $d$-hypertree $T$ turns out to be $d$-connected, while the facet graph of a $d$-hypercut $G_d(H)$ 
is $(n-d-1)$-connected. All the results are tight.

In Section \ref{sec:5},  inspired by~\cite{klee}, we study what happens to the facet graph $G_d(Z_d)$ of a simple $d$-cycle
$Z_d$ upon removal of $s$ of its $d$-simplices. The discussion, containing a study of an extremal problem
about the Betti numbers of small $d$-complexes, leads to a somewhat unexpected conclusion 
that the remaining part of $G_d(Z_d)$ has at most $s$ components.

In the last section we study cell complexes with mild topological assumptions about the structure of the cells,
and show, among other things, that the facet graphs of a simple $d$-cycles are still $(d+1)$-connected. 

The paper employs only the very basic notions of Algebraic Topology (defined in the body of the paper), and should hopefully 
be accessible to anyone interested in Combinatorial Topology. 
\section{Preliminaries}
\label{sec:prelim}
\subsection{Basic Standard Algebraic Topology Notions}\label{prelim:basic}
We mostly use the basics of Homology Theory, beautifully presented in~\cite{Mu}.
Throughout the paper we work over a fixed finite set (universe), identified with $[n]$,
and an arbitrary fixed field~$\F$. Many of our results hold if $\F$ is replaced by any Abelian group,
however, in this paper we shall not pursue this direction.

A $d$-dimensional simplex, abbreviated as $d$-simplex, is an oriented set $\sigma \subseteq [n]$ 
with $|\sigma|=d+1$. In this paper the orientation is expressed by viewing $\sigma$ as an ordered
$(d+1)$-tuple $\sigma = (s_1,s_2,\ldots,s_{d+1})$, where $s_1 < s_2 < \ldots < s_{d+1}$.~
A {\em face} of a $\sigma$ is any (oriented) simplex supported on the subset of $V[\sigma] = \{s_1,s_2,\ldots,s_{d+1}\}$. 

A simplicial complex $K$ is a collection of simplices over $[n]$ closed under containment, 
i.e., if $\sigma \in K$, then so are all the faces of $\sigma$. As before, $\sigma \in K$ is called a face of $K$.
The dimension of $K$ is the largest dimension over all its faces. Some of the complexes discussed in this paper are {\em pure} $d$-dimensional complexes, i.e., all the maximal faces of $K$ are all of the same dimension. Such faces are called {\em facets}.  

The complete $d$-dimensional complex $K_n^d = \{\sigma \subset [n] ~|~ |\sigma| \leq d+1 \}$ contains all possible simplices
over $[n]$ of dimension at most $d$. 

 {\bf  Chains:~~} Let $K$ be a $d$-complex. We denote by  $K^{(d)}$  the set of all $d$-faces of $K$. A 
{\em $d$-chain} of $K$ is formal sum $C_d = \sum_{\sigma_i \in K^{(d)}}  c_i \sigma_i$ with $c_i \in \F$.
Alternatively, $C_d$ can viewed as a $|K^{(d)}|$-dimensional $\F$-valued vector indexed by members of
$K^{(d)}$. $d$-chains of $K$ form a linear space over $\F$.

The support $\supp(C_d)$ is the set of all $d$-simplices appearing in $C_d$ with not-zero coefficients. 
The pure simplicial simplex $K(C_d)$ associated with $C_d$ is the downwards closure of $\supp(C_d)$ with respect
to containment. A complex $K$ is said to have {\em full} $d$-skeleton
if $K^{(d)}$ contains all ${n \choose d+1}$ $d$-simplices.

{\bf The Boundary Operator:~~}
For a $d$-simplex $\sigma = (s_1,s_2,\ldots,s_{d+1})$, its  $d$-boundary is defined as a $(d-1)$-chain
$\partial_d (\sigma) = \sum_{i=1}^{d+1} (-1)^{i-1} (\sigma \without s_i)$, where $(\sigma \without s_i)$ is
an oriented facet of $\sigma$ obtained by the deletion of $s_i$. Taking a linear extension of this definition, one obtains 
a linear boundary operator $\partial_d$ from the $d$-chains over $[n]$ to the $(d-1)$-chains over $[n]$.
(Observe that for a specific complex $K$, the $d$-chains of $K$ are mapped by $\partial_d$ to the $(d-1)$-chains of $K$).
The key property of the boundary operators is that $\partial_{d-1}\partial_d = 0$.

Using the vector form of $d$-chains, $\partial_d$ is represented by a ${n \choose d} \times {n \choose d+1}$ matrix $M_d$ whose 
rows are indexed by all $(d-1)$-simplices, and columns by $d$-simplices, and $\partial C_d = M_d C_d$.  
The entries of $M_d$ are given by $M_d(\tau, \sigma) = \sign(\sigma, \tau)$, also written as $[\sigma\,:\,\tau]$,
where  $\sign(\sigma, \tau)=0$ if $\tau$ is not a facet of $\sigma$, and $\sign(\sigma, \tau)=(-1)^{i-1}$ if
$\tau$ is a facet of $\sigma$ obtained by deletion of the $i$'th element in the ordered $V[\sigma]$. 
The requirement $\partial_{d-1}\partial_d = 0$ translates to $M_{d-1} M_d = 0$ for any $d=1,2,\ldots n$.
For technical reasons, for any vertex $\sigma_i \in [n]$,  it's $(-1)$-boundary $\partial_0 (\sigma_i)$ is defined as $1$.
This extends linearly to $0$-chains. I.e., the setting is that of the reduced homology.
 
The simplicial complex $K(\partial_d C_d)$ will be often denoted by $\Delta C_d$. 

{\bf Cycles and Boundaries:~~}
A $d$-chain in $\ker (\partial_d)$ is called a {\em $d$-cycle}. The
fact $\partial_{d}\partial_{d+1} = 0$ implies that if $C_d
= \partial_{d+1}K$ then $C_d$ is a $d$-cycle. Such a cycle is called a {\em $d$-boundary} of $K$. 
The boundary of any $(d+1)$-simplex is a simple $d$-cycle of size
$d+2$, which is the smallest possible size of any $d$-cycle.  The space of $d$-cycles supported on $K$ is denoted by ${\cal Z}_d (K)$, and the space of 
$d$-boundaries supported of $K$ is denoted by ${\cal B}_d (K)$. The factor space 
${\cal Z}_d (K)~/~ {\cal B}_d (K) ~=~ \tilde H_d(K)$ is called the {\em $d$-th (reduced) homology group of $K$}. 
The dimension of $\tilde H_d(K)$ is the $d$-th Betti number of $K$, denoted by $\tilde\beta_d(K)$.

A $d$-cycle $Z$ is called {\em simple} if no other (non-zero) $d$-cycle is supported on $\supp(Z)$.
Sometimes, slightly abusing the notation, the supports of $d$-cycles will also be called $d$-cycles.

{\bf Cocycles, Coboundaries and Hypercuts:~~}
The coboundary operator $\delta^{d-1}$ is a linear operator adjoint to
$\partial_p$. It is described by the left action of $M_d$, or,
equivalently, by the right action of $M_d^T$.  For historical reasons,
both the range and the domain of $\delta^{d-1}$ are called {\em
  cochains}, and denoted $C^{d}$ and $C^{d-1}$ respectively. In this
paper, while retaining the notation, we shall not make any distinction
whatsoever between $d$-chains and $d$-cochains\footnote{While a
$d$-chain $C_d$ is regarded as a free $\F$-weighed sum of the
elements of $K^{(d)}$, the $d$-cochain $C^p$ is regarded as a
mapping from $K^{(d)}$ to $\F$.}. 

Since $M_d^T M_{d-1}^T  = (M_{d-1} M_d )^T = 0$, it holds that $\delta^{d}\delta^{d-1} = 0$.
The kernel of $\delta^{d}$,  $\ker(\delta^d)$, is the space of $d$-cocycles. A $d$-cocycle $Z^*$ is called a {\em $d$-hypercut}
if it is simple, i.e., no other non-zero cocycle is supported on $\supp(Z^*)$.

{\bf Hypertrees:~~}
A set $A$ of $d$-simplices over $[n]$ is called {\em acyclic} if there are no $d$-cycles supported on $A$. 
Equivalently, $A$ is acyclic if the columns vectors of $M_n^d$ corresponding to its elements are linearly independent over $\F$. 
Thus, it immediately follows that all maximal acyclic sets $A \subseteq K_n^d$ have the same cardinality. 
A maximal acyclic set of $d$-simplices in $K_n^d$ is called {\em
  $d$-hypertree}. Hypertrees were first introduced and studied by
Kalai~\cite{gil}. The  cardinality of every $d$-hypertree is ${n-1 \choose d}$ over any field.  

For any $d$-simplex $\sigma \in K_n^d$ and any $d$-hypertee $T$, there exists a unique $d$-cycle of the form 
$Z_d=\sigma - \capp_T(\sigma)$, where $\capp_T(\sigma) = \sum_{\zeta_i \in T} c_i \zeta_i$. Observe that 
$\partial_d \sigma = \partial_d \capp_T(\sigma)$.

A complex $K$ with full $(d-1)$-skeleton has $\tilde{H}_{d-1} (K) = 0$ if and only if $K$ contains a $d$-hypertree. 

{\bf Relevant Matroidal Notions:~~}
Given the definitions above, it is clear that $K_n^d$ defines a linear matroid ${\cal M}_d$ over $\F$, 
whose cycles correspond to supports of simple $d$-cycles as above, and whose maximal $d$-acyclic sets correspond
to $d$-hypertrees.  With slightly more effort one can show that the supports of the cycles of the {\em dual matroid} of 
${\cal M}_d$, i.e., the cocycles of ${\cal M}_d$, correspond to $d$-hypercuts. Implied by the basic matroid theory is the fact 
that every $d$-hypercut intersect every $d$-hypertree (c.f. \cite{oxley}).
\subsection{Facet Graphs}
The facet graph $G(K)=G_d(K)$ of $K$, where $K$ is a $d$-complex (or, with a slight abuse of notation, just a set of $d$-simplices, 
or even a $d$-chain), is a simple graph whose vertices correspond to the $d$-simplices in $K$, 
and two vertices form an edge if the corresponding $d$-simplices have a common $(d-1)$-dimensional face.  
Thus, each edge of $G(K)$ corresponds to a unique $(d-1)$-face of $K$. However, a $(d-1)$-face of $K$ may 
correspond to many or none of the edges of $G(K)$.

With a slight abuse of notation, we shall speak of facet graph of $d$-cycles and $d$-hypercuts, although technically they are not complexes but chains.
\section{Tools}
The following simple lemma will be at the core of many arguments to come. 
\begin{lemma}
\label{cl:d-cycle-conn}
Let $D$ be a collection of at most $d$ simplices of dimension at most $d$, and let $K(D)$ be
the corresponding simplicial complex. Then, every $(d-1)$-cycle supported 
on $K(D)$ is a $(d-1)$-boundary of $K(D)$. That is, every such cycle $Z$ is of a form 
$Z=\sum_{\sigma \in D} c_\sigma \partial\sigma$.
\end{lemma}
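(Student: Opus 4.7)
The plan is to prove the equivalent statement $\tilde H_{d-1}(K(D)) = 0$ by double induction, outer on $d$ and inner on $|D|$. The equivalence with the form in the lemma is immediate: since every member of $D$ has dimension at most $d$, the $d$-simplices of $K(D)$ are precisely $\{\sigma \in D : \dim \sigma = d\}$, so the image of $\partial_d$ restricted to $C_d(K(D))$ consists exactly of the chains $\sum_{\sigma \in D} c_\sigma\, \partial \sigma$ (with $c_\sigma = 0$ on the lower-dimensional members of $D$).

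The base case $d=1$ and the inner base $|D|\le 1$ are both trivial: $K(D)$ is then empty, a vertex, or a single closed simplex, each with vanishing reduced $(d-1)$-homology.

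For the inductive step, fix $d \ge 2$ and $|D| = k \ge 2$, pick any $\sigma \in D$, and set $D' = D \setminus \{\sigma\}$. I would feed the decomposition $K(D) = K(D') \cup K(\{\sigma\})$ into the Mayer--Vietoris long exact sequence; the relevant fragment is
\begin{equation*}
\tilde H_{d-1}(K(D')) \oplus \tilde H_{d-1}(K(\{\sigma\})) \longrightarrow \tilde H_{d-1}(K(D)) \longrightarrow \tilde H_{d-2}\bigl(K(D') \cap K(\{\sigma\})\bigr).
\end{equation*}
The first term vanishes by the inner IH applied to $D'$ together with the contractibility of the closed simplex $K(\{\sigma\})$, so it suffices to prove $\tilde H_{d-2}(K(D') \cap K(\{\sigma\})) = 0$.

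The crux is to identify $K(D') \cap K(\{\sigma\})$ with the subcomplex of $K(\{\sigma\})$ generated by the vertex-set intersections $\{\sigma \cap \sigma_i : \sigma_i \in D'\}$. Since $\sigma_i \ne \sigma$ and both have dimension at most $d$, a short case analysis (separately for $\sigma_i \subsetneq \sigma$, $\sigma \subsetneq \sigma_i$, and incomparable $\sigma, \sigma_i$) gives $\dim(\sigma \cap \sigma_i) \le d-1$ in every case. Hence $K(D') \cap K(\{\sigma\})$ is generated by at most $k-1 \le d-1$ simplices of dimension at most $d-1$, and the outer IH applied at dimension $d-1$ yields the required vanishing, closing the induction. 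The main obstacle I anticipate is precisely this dimension bookkeeping: one must rule out the superficially worrying situation in which $\sigma$ and some $\sigma_i$ are comparable, which is saved by the strictness of the containment together with the global bound $\dim \sigma_i \le d$.
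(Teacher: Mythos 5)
Your proof is correct, but it takes a genuinely different route from the one the paper gives for this lemma. The paper's argument is combinatorial and collapsibility-based: it first shows (Claim~\ref{cl:simplex}, via a connectivity argument on $K_{d+1}$ minus at most $d-1$ vertices and edges) that a $d$-simplex with at most $d-1$ marked proper faces admits elementary $(d-1)$- and $(d-2)$-collapses removing its top face and all unmarked $(d-1)$-faces; it then inducts on the number of $d$-simplices (Claim~\ref{cl:collapsing}) to collapse $K(D)$ below dimension $d-1$, and finally observes that elementary collapses preserve the property that $(d-1)$-cycles are boundaries. This yields slightly more than homology vanishing, namely an explicit collapse. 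Your argument replaces all of this with a double induction and a reduced Mayer--Vietoris sequence for $K(D) = K(D') \cup K(\{\sigma\})$, using contractibility of the closed simplex and the identification $K(D') \cap K(\{\sigma\}) = K(T)$ with $T = \{\sigma \cap \sigma_i\}$, whose members you correctly check to number at most $d-1$ and to have dimension at most $d-1$ (the strict-containment case analysis is exactly the point that needs care, and you handle it). This is in fact very close in spirit to the paper's own later generalization in Section~6, where the inductive scheme ${\bf (II)_{d-1}} \Rightarrow {\bf (I)_d} \Rightarrow {\bf (II)_d}$ plays the role of your Mayer--Vietoris connecting map plus acyclicity of the cell; your version is cleaner for simplicial complexes because a closed simplex is a cone, whereas the cell-complex version must invoke axiom {\bf A3} in place of contractibility. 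What the paper's combinatorial proof buys in exchange is the stronger collapsibility conclusion and independence from the machinery of exact sequences; what yours buys is brevity and a form that generalizes immediately to the homological setting. One small point worth stating explicitly in a final write-up: when $\sigma \cap \sigma_i = \emptyset$ you should discard that member of $T$ (this only shrinks $T$), and when $K(D') \cap K(\{\sigma\}) = \emptyset$ the term $\tilde H_{d-2}$ of the empty complex still vanishes since $d \ge 2$, so the Mayer--Vietoris fragment you quote remains valid.
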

The combinatorial proof presented here is based on the following two claims.
In Section~\ref{sec:5} we shall establish a more general version of the lemma, 
using an algebraic-topological approach.

Call an $i$-face $\zeta$ of a simplicial complex $K$ {\em exposed} if it is contained in a unique $(i+1)$-face
$\tau$ of $K$, (in particular, such $\tau$ must be maximal).  An {\em elementary $i$-collapse} is the operation of elimination (or, alternatively, {\em collapse}) of a pair of faces 
$\zeta, \tau$ as above from $K$, resulting in a proper subcomplex of $K$. The notion of 
$i$-collapse (due to Wegner~\cite{wegner}) is frequently used in Combinatorial Topology.

\begin{claim}\label{cl:simplex}
Let $\sigma$ be a $d$-simplex, and let $T$ be a subset of faces of
$\sigma$ of dimension less than $d$, 
with $|T|\leq d-1$ . 
Let $K(T)$ be the complex defined by $T$. Call a face of $\sigma$ {\em unmarked}
if it is not in $K(T)$.
Then, there is a sequence of $(d-1)$ and $(d-2)$
elementary collapses that eliminate the $d$-face of $\sigma$, and all the unmarked  $(d-1)$-faces of $\sigma$.
\end{claim}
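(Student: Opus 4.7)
The plan is to perform a single $(d-1)$-collapse that destroys $\sigma$ together with one unmarked $(d-1)$-face, and then eliminate the remaining unmarked $(d-1)$-faces one at a time by $(d-2)$-collapses, using the simple intersection pattern of the faces of $\sigma$ to supply the required trigger faces.

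First I would verify that unmarked $(d-1)$-faces exist in abundance. Every face in $T$ has dimension strictly less than $d$, so a $(d-1)$-face of $\sigma$ lies in $K(T)$ if and only if it actually appears in $T$ itself. Consequently at most $|T|\le d-1$ of the $d+1$ $(d-1)$-faces of $\sigma$ are marked, leaving at least two unmarked ones. I would pick any unmarked $(d-1)$-face $\zeta_0$. Since $\sigma$ is the only $d$-face currently present, $\zeta_0$ is contained in a unique $d$-face, so the elementary $(d-1)$-collapse of the pair $(\zeta_0,\sigma)$ is legal and eliminates both faces.

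Let $\zeta_1,\ldots,\zeta_{u-1}$ denote the remaining unmarked $(d-1)$-faces. For each $i\ge 1$, set $\eta_i := \zeta_i\cap\zeta_0$, a $(d-2)$-face of $\sigma$. The only $(d-1)$-faces of $\sigma$ that contain $\eta_i$ are $\zeta_0$ and $\zeta_i$, since a $(d-2)$-subset of $V[\sigma]$ is missed by exactly two of $\sigma$'s vertices. Because $\zeta_0$ has already been eliminated, $\eta_i$ lies in the unique current $(d-1)$-face $\zeta_i$, so the elementary $(d-2)$-collapse of $(\eta_i,\zeta_i)$ is legal. I would perform these collapses in the order $i=1,2,\ldots,u-1$. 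The trigger faces $\eta_i$ are pairwise distinct---each is determined by the two vertices of $\sigma$ it misses, namely the one absent from $\zeta_i$ and the one absent from $\zeta_0$---so no $\eta_i$ is destroyed before it is used.

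The main subtlety to watch is precisely this bookkeeping: for each $i$, one has to confirm simultaneously that $\eta_i$ is still present and that it is uniquely covered by a $(d-1)$-face at the moment of its use. Both points are handled by the observation that the $(d-1)$-faces of $\sigma$ meet pairwise in a unique $(d-2)$-face (the $K_{d+1}$ pattern), so once $\zeta_0$ is removed at the outset each surviving unmarked $\zeta_i$ retains its own private anchor $\eta_i$. No deeper obstacle is expected.
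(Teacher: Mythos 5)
Your collapsing sequence is legal inside the simplex and does eliminate $\sigma$ together with every unmarked $(d-1)$-face, so the statement read completely literally is satisfied. Nevertheless there is a genuine gap relative to what the claim has to deliver. In the proof of Claim~\ref{cl:collapsing} the sequence produced here is transferred to the ambient complex $K(S)$, and that transfer works only for collapses that involve \emph{no marked faces}: a marked face is, by construction, shared with other simplices of $S$, so it need not be exposed in $K(S)$ even when it is exposed in $\sigma$. Your trigger faces $\eta_i=\zeta_i\cap\zeta_0$ can perfectly well be marked. The set $T$ may contain faces of dimension $d-2$, and if $\zeta_i\cap\zeta_0\in T$ then the pair $(\eta_i,\zeta_i)$ collapses along a marked $(d-2)$-face even though $\zeta_0$ and $\zeta_i$ are both unmarked. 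Concretely, for $d\ge 2$ take $T=\{\zeta_1\cap\zeta_0\}$: every $(d-1)$-face is unmarked, yet your second collapse rides on a marked trigger. Nor can this be repaired just by choosing $\zeta_0$ more carefully: in the $K_{d+1}$-pattern of pairwise intersections of the $(d-1)$-faces, for every $d\ge 3$ one can mark at most $\lceil (d+1)/2\rceil\le d-1$ of the $(d-2)$-faces so that every $(d-1)$-face contains a marked one, and then no star of unmarked $(d-2)$-faces exists at any center.

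The paper's proof addresses exactly this point. It forms the graph $H$ from $K_{d+1}$ (vertices $=$ $(d-1)$-faces, edges $=$ $(d-2)$-faces) by deleting \emph{both} the marked vertices and the marked edges, observes that deleting at most $d-1$ vertices and edges in total from $K_{d+1}$ leaves a connected graph, and then performs the $(d-2)$-collapses along a search order of $H$ starting from $\zeta_0$ --- in effect along a spanning tree of the unmarked part, rather than along the star at $\zeta_0$. Your local observations (each pair of $(d-1)$-faces meets in a unique $(d-2)$-face, each $(d-2)$-face lies in exactly two $(d-1)$-faces, so a collapsed neighbour exposes a private trigger) are precisely the right picture and in fact constitute the ``blue/white'' step of the paper's argument; what is missing is the replacement of the star by a spanning tree of unmarked edges, together with the connectivity count guaranteeing that such a tree exists.
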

\begin{proof}
Since $|T|\leq d-1$, there exists an unmarked $(d-1)$-face $\tau$ of $\sigma$. Collapsing it together
with the $d$-face of $\sigma$, we arrive at $\Delta\sigma \without \{\tau\}$.

Consider the facet graph $G_{d-1}(\Delta \sigma)$. 
It is isomorphic to $K_{d+1}$, the complete graph on $d+1$ vertices, where the 
$(d-1)$-faces of $\Delta\sigma$ correspond to the vertices, and the
$(d-2)$-faces correspond (in a 1-1 manner) to the edges. 
Let $H$ be the subgraph of $G_{d-1}(\Delta \sigma)$, that is obtained
by removing all the vertices and the edges corresponding to the marked
faces. Consider all vertices of $H$ as being colored white.

We now will consider the following process of elementary
$(d-2)$-collapses that, in turn, will  color the vertices of $H$ 
blue once the corresponding $(d-1)$-faces   are  collapsed. We start with a single blue vertex
that corresponds to $\tau$. 

Observe that an edge of $H$ corresponds to a (currently) exposed $(d-2)$-face if and 
only if one of its endpoints is white, and the other is blue. Similarly, it corresponds to an (already) collapsed $(d-2)$-face if and only if both its endpoints are blue. Thus, an operation of an elementary $(d-2)$-collapse on $\Delta\sigma$ that involves only unmarked faces, can be interpreted in the terms of $H$ as follows: pick a blue vertex with a white 
neighbour, and make this neighbour blue. The goal can be equivalently restated as colouring all 
the vertices of $H$ blue.

Clearly, this is possible if and only if $H$ is connected. Indeed,  recall that $H$ is obtained from $K_{d+1}$ by removing at most $(d-1)$ vertices and edges in total (not counting the edges whose removal was caused by that of a vertex). Let $r$ be the number of removed vertices, and $q$ be the number of subsequently removed edges. Removing
$r$ vertices turns $K_{d+1}$ into $K_{d-r+1}$. The latter graph is obviously $(d-r)$-edge-connected. Therefore, removing additional $q$ edges from $K_{d-r+1}$, where $q\leq (d-1)-r$, results in a connected graph. 
\end{proof}

\begin{claim}\label{cl:collapsing}
Let $S$ be a collection  of at most $d$ simplices of dimension at most $d$, and let $K(S)$ be
the corresponding simplicial complex. Then, for $d>1$, {\em all} $d$- and 
$(d-1)$-faces of $K(S)$ can be eliminated by a series of elementary $(d-1)$- and $(d-2)$-collapses.
For $d=1$ the situation is slightly different: the unique 1-face (if any) of $K(S)$ can obviously
be eliminated by an elementary $0$-collapse, however there is no way to eliminate
the surviving 0-face(s). 
\end{claim}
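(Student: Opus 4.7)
I intend to induct on $|S|$ (keeping $d\ge 2$ fixed), using Claim~\ref{cl:simplex} to ``consume'' one $d$-simplex of $S$ at a time. The $d=1$ case is immediate: if $S$ contains an edge $e=\{u,v\}$, apply the elementary $0$-collapse pair $(u,e)$, which eliminates the sole $1$-face; any remaining $0$-face is maximal and has no higher face to be paired with, so it cannot be removed.

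For the inductive step with $d\ge 2$, pick $\sigma\in S$ of maximum dimension.

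\emph{Case A} ($\dim\sigma=d$). Apply Claim~\ref{cl:simplex} to $\sigma$ with the marked set
\[
T_\sigma \;=\; \{\,\sigma\cap\sigma' \;:\; \sigma'\in S\setminus\{\sigma\}\,\},
\]
whose cardinality is at most $|S|-1\le d-1$, as required. The main step is to verify that the sequence of $(d-1)$- and $(d-2)$-collapses that Claim~\ref{cl:simplex} produces on the standalone complex $\Delta\sigma$ is in fact a legal sequence of elementary collapses inside the ambient $K(S)$. This hinges on the observation that any face $\eta$ of $\sigma$ which is unmarked (i.e.\ not contained in $K(T_\sigma)$) cannot lie in any $\sigma'\in S\setminus\{\sigma\}$---otherwise $\eta\subseteq\sigma\cap\sigma'\in T_\sigma$ would mark it. Consequently: (i)~an unmarked $(d-1)$-face of $\sigma$ is contained in a unique $d$-face of $K(S)$, namely $\sigma$; (ii)~for any unmarked $(d-2)$-face $\zeta\subseteq\sigma$, the only $(d-1)$-faces of $K(S)$ containing $\zeta$ are the two $(d-1)$-faces inside $\Delta\sigma$---exactly the ``edge of~$H$'' picture on which the proof of Claim~\ref{cl:simplex} is built. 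After executing those collapses inside $K(S)$, the residue equals $K(S\setminus\{\sigma\})\cup Y$, where $Y$ is a set of leftover unmarked faces of $\sigma$, all of dimension $\le d-2$ and disjoint from $K(S\setminus\{\sigma\})$ (again by the same unmarked-ness argument). Applying the inductive hypothesis to $S\setminus\{\sigma\}$ yields a further sequence of $(d-1)$- and $(d-2)$-collapses eliminating every $d$- and $(d-1)$-face of $K(S\setminus\{\sigma\})$; these remain valid inside the residue, since $Y$ introduces no $(d-1)$- or $d$-faces and therefore cannot perturb the exposedness relevant to those collapses.

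\emph{Case B} (no $\sigma\in S$ has dimension $d$). If the maximum dimension in $S$ is at most $d-2$, there is nothing to do. Otherwise the $(d-1)$-faces of $K(S)$ are exactly the $(d-1)$-dimensional members $\sigma'_1,\dots,\sigma'_k$ of $S$ (the ``tops''), with $k\le|S|\le d$. I process them in order, performing at step $i$ an elementary $(d-2)$-collapse $(\zeta_i,\sigma'_i)$ for some still-present, exposed $(d-2)$-subface $\zeta_i$ of $\sigma'_i$. Existence of~$\zeta_i$ follows from a direct count: $\sigma'_i$ has $d$ many $(d-2)$-subfaces, at most $i-1$ of them have been eliminated by the previous $i-1$ collapses (each of which removes exactly one $(d-2)$-face), and at most $k-i$ of them are contained in some later top $\sigma'_j$ with $j>i$, since any two distinct $(d-1)$-simplices share at most one $(d-2)$-face. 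Hence at least $d-(i-1)-(k-i)=d-k+1\ge 1$ of the $(d-2)$-subfaces of $\sigma'_i$ are simultaneously present and exposed, giving a valid choice of~$\zeta_i$.

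The subtle part of the proof is the validity check in Case~A: although Claim~\ref{cl:simplex} is proved on a standalone simplex, the marking $T_\sigma$ has been designed precisely so that exposedness of the faces manipulated by the claim's procedure inside $\Delta\sigma$ agrees with their exposedness inside the ambient $K(S)$.
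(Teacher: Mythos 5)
Your proof is correct and follows essentially the same route as the paper's: induct by peeling off one $d$-simplex $\sigma$ at a time, apply Claim~\ref{cl:simplex} with the marked set $T=\{\sigma\cap\xi\,:\,\xi\in S\setminus\{\sigma\}\}$, note that the collapses transfer to the ambient complex and that the leftover unmarked faces have dimension $\le d-2$ so they cannot obstruct later collapses, and handle the case with no $d$-simplices by counting exposed $(d-2)$-facets of the $(d-1)$-simplices. Your Case~A validity check and the disjointness of the residue $Y$ from $K(S\setminus\{\sigma\})$ are exactly the (tersely stated) observations in the paper's proof, spelled out in more detail.
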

\begin{proof}
The proof is by an induction on the number of $d$-simplices in $S$.

If $S$ has no $d$-simplices, then every $(d-1)$-face $\tau \in K(S)$ is a $(d-1)$-simplex in $S$.
Observe that every such $\tau$ has a (distinct) exposed $(d-2)$-facet $\zeta$ in $K(S)$. Indeed, $\tau$ has $d$ facets, while any simplex in $S \without \{\tau\}$ may un-expose at most one facet, and $|S \without \{\tau\}| < d$.
Thus, all $(d-1)$-faces of $K(S)$ can be eliminated by elementary $(d-2)$-collapses that 
eliminate pairs of faces $\zeta,\tau$ as above.

Otherwise, if $S$ contains some $d$-simplices, proceed as follows. Pick any $d$-simplex 
$\sigma \in S$, set $T  = \{\sigma \cap \xi \;|\; \xi \in S\without \{\sigma\}\}$, and mark
 the faces of $K(T)$ in $\sigma$. The assumption, $|S| \leq d$
 implies that  $|T| \leq d-1$. Hence by Claim \ref{cl:simplex}, 
the $d$-face, as well as all the unmarked $(d-1)$-faces of $\sigma$
can be collapsed by elementary $(d-1)$ and $(d-2)$ collapses. Since 
 any elementary $(d-1)$- or $(d-2)$-collapse 
in $\sigma$ that involves only the unmarked edges, can be carried out in $K(S)$ as well, 
resulting in a complex $K'(S)$ be the resulting complex.

Observe that any series of elementary $(d-1)$- and $(d-2)$-collapses 
in $K(S\without \{\sigma\})$ can also be performed in $K'(S)$. Indeed, at any stage of
collapse, an exposed $(d-1)$- or a $(d-2)$-face $\tau \in K(S\without \{\sigma\})$ is necessarily 
exposed in $K'(S)$ as well, since the faces in $K'(S) \without
K(S\without \{\sigma\})$ are of dimension $<d-1$.

Employing this observation, and applying the induction hypothesis to $K(S\without \{\sigma\})$, 
the conclusion follows.
\end{proof}
For those familiar with the properties of the collapse operation, the implication 
Claim~\ref{cl:collapsing} ~$\Longrightarrow$~ Lemma~\ref{cl:d-cycle-conn} is immediate.
For the sake of completeness, here is a simple self-contained argument:
\\
\begin{proof} {\bf (of Lemma~\ref{cl:d-cycle-conn})} 
Let $K$ be a simplicial complex, and assume that $K' = K \without \{\zeta,\tau\}$ was obtained from $K$ by an elementary $(d-2)$-collapse involving an exposed $(d-2)$-face $\zeta$, and the (unique, maximal) $(d-1)$-face $\tau$ containing it.
Then, any $(d-1)$-cycle $Z$ supported on $K$, is supported on $K'$ as well. 
In other words, the coefficient $c_\tau$ of $\tau$ in $Z$ must be $0$.
Indeed, since $\zeta$ is contained only in $\tau$, the coefficient of $\zeta$ in 
$\partial Z$ is $\sign(\tau,\zeta) \, c_\tau$, and thus $\sign(\tau,\zeta)\, c_\tau = 0$.

Next, let $K$ be a simplicial complex, and assume that $K'' = K \without \{\tau,\sigma\}$ was obtained from $K$ by an elementary $(d-1)$-collapse involving an exposed $(d-1)$-face $\tau$, and the (unique, maximal) $d$-face $\sigma$ 
containing it. Let $Z$ be a $(d-1)$ cycle supported on $K$ and let $Z' =Z - \sign(\sigma,\tau)\, c_\tau \cdot \partial \sigma$. Then,
$Z'$ is $(d-1)$-cycle supported on $K''$, and $Z$ is of a form $Z=Z' + \partial T$ for 
a $d$-chain $T= c_\tau \cdot \sigma$.

Combining the two observations, we conclude that if $R$ is obtained from $K$
by a series of elementary $(d-1)$- and $(d-2)$-collapses, then any $(d-1)$-cycle
$Z$ supported on $K$ is of the form $Z=Z'' + \partial U$, where $Z''$ is $(d-1)$-cycle supported on $R$, and $U$ is a $d$-chain on $K$. 

By Claim~\ref{cl:collapsing}, $K(D)$ collapses to a complex of dimension $<d-1$, lacking, in particular, any non-zero $(d-1)$-cycles. Thus, any $(d-1)$-cycle
$Z$ supported on $K(D)$ must be of the form $Z=\partial U$, as claimed.
\end{proof}

%
\noindent
{\bf Duality between Cycles and co-Cycles in the Complete Complex $K_n^{n-1}$ }\\
In order to discuss the structure of the facet graphs of hypercuts, it will be useful to establish a duality between  hypercuts and  simple cycles.
Such duality exists in  Matroid Theory~\cite{oxley}, and in a related, but a slightly more sophisticated form in the Algebraic Topology.
It is at the core of the important Poincar$\acute{\rm e}$ Duality and Alexander Duality.
For a relevant combinatorial exposition of the latter see~\cite{bjorner-tancer} and the references therein. 
In fact, Claim~\ref{cl:coboundary} below is an easy special case of the much more involved main result of that paper. 

Let $\Sigma$ be an $(n-1)$-simplex (seen as a complex) on the underlying space $[n]$.
I.e., $\Sigma = K_n^{n-1}$. Define a correspondence between the $(k-1)$-chains and the $(r-1)$-cochains 
of $\Sigma$, where $k+r=n$, in the following way.

For $\sigma = \langle p_1, p_2,\ldots,p_{k} \rangle$ where $1 \leq p_1 < p_2 < \ldots < p_{k} \leq n$, let $\bar{\sigma} = \langle q_1, q_2,\ldots,q_{r} \rangle$, where $1 \leq q_1 \leq q_2 \leq \cdots q_r \leq n$, and $q_j$ appears in $\bar{\sigma}$ iff it does not appear in $\sigma$. 
Set $s(\sigma)=\prod_{p_i \in \sigma} (-1)^{p_i-1}$.
The dual (signed) $(r-1)$-simplex of $\sigma$ is defined by \[
\sigma^{*} ~= ~s(\sigma) \cdot\bar{\sigma}\;.\]
Extending this definition to chains and cochains, the dual of a $(k-1)$-chain (or cochain) $C=\sum c_\sigma \sigma$ 
is defined as a $(r-1)$-cochain (respectively, chain) $C^* =  \sum c_\sigma \sigma^*$.  
The key fact about this correspondence is: 

\begin{claim}\label{cl:coboundary}
~~$(\partial_{k-1} C)^* = \delta^{r-1}\, C^*$\,.
\end{claim}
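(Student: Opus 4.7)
The plan is to prove the identity by linearity, reducing to the case of a single $(k-1)$-simplex $\sigma$, and then verifying that the two $r$-cochains $(\partial_{k-1}\sigma)^*$ and $\delta^{r-1}\sigma^*$ agree on every $r$-simplex $\tau \in \Sigma$. All the content is sign bookkeeping, and the main obstacle will be correctly tracking the position of a distinguished element inside an ordered tuple.

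First I would unpack the right-hand side. Since $\sigma^*$ is supported on the single simplex $\bar\sigma$ with coefficient $s(\sigma)$, the coboundary evaluates as
\[
(\delta^{r-1}\sigma^*)(\tau) \;=\; \sum_{\rho \subset \tau,\; |\rho|=r} [\tau\!:\!\rho]\, \sigma^*(\rho) \;=\; [\tau\!:\!\bar\sigma]\cdot s(\sigma),
\]
which vanishes unless $\bar\sigma \subset \tau$, i.e.\ $\tau = \bar\sigma \cup \{p_i\}$ for some $p_i \in \sigma$. In that case I would compute the position $j$ of $p_i$ in the ordered tuple $\tau$: among the $p_i-1$ elements of $[n]$ strictly less than $p_i$, exactly $i-1$ (namely $p_1,\dots,p_{i-1}$) belong to $\sigma$ and so are absent from $\tau$, giving $j=p_i-i+1$ and hence $[\tau\!:\!\bar\sigma] = (-1)^{p_i-i}$. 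Thus
\[
(\delta^{r-1}\sigma^*)(\tau) \;=\; (-1)^{p_i-i}\, s(\sigma) \qquad \text{when } \tau = \bar\sigma \cup\{p_i\}.
\]

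Next I would expand the left-hand side. Using $\partial_{k-1}\sigma = \sum_{i=1}^{k}(-1)^{i-1}(\sigma\!\without p_i)$ and the observation that $\overline{\sigma\!\without p_i} = \bar\sigma\cup\{p_i\}$ (these are distinct simplices as $i$ varies), at most one term of the dualized boundary is supported on a given $\tau$. When $\tau = \bar\sigma\cup\{p_i\}$ the contribution is
\[
(\partial_{k-1}\sigma)^*(\tau) \;=\; (-1)^{i-1}\, s(\sigma\!\without p_i).
\]
The key identity $s(\sigma\!\without p_i) = (-1)^{p_i-1}\, s(\sigma)$ is immediate from the product formula for $s(\cdot)$, so the left-hand side equals $(-1)^{i-1}(-1)^{p_i-1}s(\sigma)= (-1)^{p_i+i-2}s(\sigma) = (-1)^{p_i-i}s(\sigma)$, matching the right-hand side. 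For $\tau$ not of the form $\bar\sigma\cup\{p_i\}$ with $p_i\in\sigma$, both sides vanish for the same reason (no summand of $\partial_{k-1}\sigma$ dualizes to a simplex contained in $\tau$'s support). Extending by linearity over $C=\sum c_\sigma\sigma$ finishes the proof.
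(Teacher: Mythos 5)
Your proposal is correct and is essentially the paper's own argument: both reduce to a single $(k-1)$-simplex by linearity, both rest on the position computation $j=p_i-i+1$ giving $[\bar\sigma\cup p_i\,:\,\bar\sigma]=(-1)^{p_i-i}$, and your identity $s(\sigma\!\setminus\! p_i)=(-1)^{p_i-1}s(\sigma)$ is exactly the paper's relation (2) in disguise. The only difference is presentational — you evaluate both sides pointwise on a test $r$-simplex $\tau$, while the paper manipulates the formal sums directly — so there is nothing further to add.
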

The proof appears in Appendix A.

This leads to the following lemma, to be used in the
Section~\ref{sec:cuts}, dedicated to hypercuts.  Let $k$ be a natural number in the range $[1,n]$, and let $k+r = n$.
\begin{lemma}
\label{lm:dual}
The operator $*$ defines a 1-1 correspondence between simple
$(k-1)$-cycles $Z_k$ and $(r-1)$-hypercuts $H_{r-1}$ of $K_n^{n-1}$,
given by $Z_{k-1} \mapsto Z_{k-1}^* = H_{r-1}$. Moreover, the corresponding
facet graphs $G_{k-1}(Z_{k-1})$ and $G_{r-1}(H_{r-1})$ are isomorphic.
\end{lemma}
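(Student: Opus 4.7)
The plan breaks into three parts: (i) show that $*$ carries $(k-1)$-cycles to $(r-1)$-cocycles, and preserves simplicity, (ii) show the resulting map between simple cycles and hypercuts is a bijection, and (iii) exhibit an explicit graph isomorphism between the facet graphs.

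For (i), the key observation is that $*$, as a map from $(k-1)$-chains of $\Sigma$ to $(r-1)$-cochains of $\Sigma$, sends each basis element $\sigma$ to $s(\sigma)\,\bar{\sigma}$ with $s(\sigma)\in\{\pm1\}$, so it is a linear isomorphism of ambient spaces. Claim~\ref{cl:coboundary} immediately gives that if $\partial_{k-1} Z = 0$ then $\delta^{r-1} Z^* = 0$, so cycles go to cocycles. Moreover $\supp(Z^*) = \{\bar{\sigma} \mid \sigma\in\supp(Z)\}$, and the correspondence $\sigma\leftrightarrow\bar{\sigma}$ is 1-1 between these supports. To transfer simplicity, I would argue contrapositively: if $W^*$ is a nonzero cocycle supported inside $\supp(Z^*)$, then pulling back through the inverse of $*$ yields a nonzero chain $W$ supported inside $\supp(Z)$; applying the analogue of Claim~\ref{cl:coboundary} for the inverse direction (which follows from the involutive identity $(\sigma^*)^* = s(\sigma)s(\bar{\sigma})\,\sigma$) shows that $W$ is in fact a cycle, contradicting simplicity of $Z$. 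Hence $Z^*$ is a hypercut.

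For (ii), the same argument applied in reverse — starting from a simple $(r-1)$-cocycle and dualizing via the inverse of $*$ — produces a simple $(k-1)$-cycle; since $*$ is invertible on the full chain space, these two maps are inverses of each other on the subsets of interest. Thus the map $Z\mapsto Z^*$ is a bijection between simple $(k-1)$-cycles and $(r-1)$-hypercuts.

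For (iii), I use the vertex bijection $\sigma\leftrightarrow\bar{\sigma}$ between $\supp(Z)$ and $\supp(Z^*)$. Two $(k-1)$-simplices $\sigma,\tau$ are adjacent in $G_{k-1}(Z)$ iff they share a $(k-2)$-face, equivalently $|\sigma\cap\tau|=k-1$, equivalently $|\sigma\triangle\tau|=2$. Since $\bar{\sigma}\triangle\bar{\tau}=\sigma\triangle\tau$ in $[n]$, and both $\bar{\sigma},\bar{\tau}$ have size $r$, this is the same as $|\bar{\sigma}\cap\bar{\tau}|=r-1$, i.e., adjacency in $G_{r-1}(Z^*)$. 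The adjacencies therefore correspond exactly, proving the graph isomorphism. The main conceptual subtlety I expect is the simplicity-preservation step, where one must carefully track that the sign factor $s(\sigma)$ is harmless at the level of supports while ensuring the inverse direction of the duality identity is also in hand.
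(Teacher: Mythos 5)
Your proposal is correct and follows essentially the same route as the paper: Claim~\ref{cl:coboundary} together with the (signed) involutivity of $*$ gives the bijection between $(k-1)$-cycles and $(r-1)$-cocycles, preservation of supports/containment gives the correspondence of simple cycles with hypercuts, and the complementation map $\sigma\mapsto\bar\sigma$ gives the facet-graph isomorphism. The only cosmetic difference is that you verify adjacency-preservation via $|\sigma\triangle\tau|=|\bar\sigma\triangle\bar\tau|=2$, whereas the paper observes that two $(k-1)$-simplices are adjacent iff they lie in a common $k$-simplex; both are immediate.
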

\begin{proof}
Observe that for any chain or cochain $C$ of $K_n^{n-1}$, ~$C^{**} =
(-1)^{{{n+1}\choose 2} - n}\, C$,\; and hence  the duality map $*$ is a 1-1 correspondence between the $(k-1)$-chains and the $(r-1)$-cochains. Since by Claim~\ref{cl:coboundary}, it maps cycles to co-cycles, and co-cycles to cycles, it yields 
a 1-1 correspondence between  $(k-1)$-cycles and  $(r-1)$-co-cycles. Moreover, since it preserves containment, it yields a 1-1 correspondence between the minimal, i.e., simple, $(k-1)$-cycles and the minimal $(r-1)$-co-cycles, i.e., the $(r-1)$-hypercuts.

The isomorphism between the facet graphs of $Z_{k-1}$ and $H_{r-1}=Z_{k-1}^*$ is given by the mapping $v_\sigma \mapsto v_{\bar\sigma}$ from $V[G_{k-1}(Z_{k-1})]$ to $V[G_{r-1}(H_{r-1})]$.
Since a pair of $(k-1)$-simplices $\sigma,\zeta \in K_n^{n-1}$ share an $(k-2)$-face (i.e., are adjacent), if and only if they are both contained in a $k$-simplex $\xi\in K_n^{n-1}$, one concludes that
$\sigma,\zeta$ are adjacent iff  $\bar\sigma, \bar\zeta$ are. 
\end{proof}
%
%
\section{Basic Results}
\subsection{Connectivity of Cycles}
We are now ready to present the central results of this paper, starting 
with the $(d+1)$-connectivity of the simple $d$-cycles.
\begin{theorem}
  \label{thm:d-cycle-conn}
Let $Z$ be a simple $d$-cycle, $d\geq 1$. Then, its facet graph $G(Z)=G_d (Z)$ is $(d+1)$-connected.  
\end{theorem}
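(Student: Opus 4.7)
The plan is to proceed by contradiction. Suppose $G_d(Z)$ has a vertex cut $S \subseteq \supp(Z)$ of size at most $d$, so that $\supp(Z) \setminus S$ splits as a disjoint union $A \sqcup B$ of nonempty sets with no edge of $G_d(Z)$ between them; equivalently, no $d$-simplex in $A$ shares a $(d-1)$-face with any $d$-simplex in $B$.

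Decompose $Z$ as $Z = Z_A + Z_B + Z_S$, the restrictions of $Z$ to the three parts. The non-adjacency of $A$ and $B$ means that $\supp(\partial Z_A)$ and $\supp(\partial Z_B)$ are disjoint as sets of $(d-1)$-simplices. Combined with $\partial Z_A + \partial Z_B + \partial Z_S = \partial Z = 0$, this forces $\supp(\partial Z_A) \subseteq \supp(\partial Z_S)$, which sits inside the $(d-1)$-skeleton of $K(S)$. Hence $\partial Z_A$ is a $(d-1)$-chain supported on the complex $K(S)$ generated by the at most $d$ $d$-simplices of $S$, and it is automatically a $(d-1)$-cycle because $\partial^2 = 0$.

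Now Lemma \ref{cl:d-cycle-conn}, applied with $D = S$, yields $\partial Z_A = \partial W$ for some $d$-chain $W$ supported on $S$. Then $Z_A - W$ is a $d$-cycle supported on $A \cup S$, a \emph{proper} subset of $\supp(Z)$ since $B \neq \emptyset$, and it is nonzero because its restriction to $A$ equals $Z_A \neq 0$ while $W$ lives on the disjoint set $S$. This contradicts the simplicity of $Z$, completing the proof.

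The main obstacle lies in the first two steps: recognizing that a vertex cut $S$ of size at most $d$ ``localizes'' the boundary of $Z_A$ onto the small subcomplex $K(S)$, which has at most $d$ top-dimensional faces. Once this localization is in place, Lemma \ref{cl:d-cycle-conn} does the heavy lifting, allowing one to repair $Z_A$ into a proper sub-cycle of $Z$ and contradict simplicity. The bound $(d+1)$ will be tight, as witnessed by the boundary of the $(d+1)$-simplex, whose facet graph is $K_{d+2}$.
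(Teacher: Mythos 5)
Your proof is correct and follows essentially the same route as the paper's: both localize $\partial Z_A$ onto the subcomplex $K(S)$ generated by the cut, invoke Lemma~\ref{cl:d-cycle-conn} to write it as $\partial W$ with $W$ supported on $S$, and contradict simplicity via the nonzero cycle $Z_A - W$ supported on a proper subset of $\supp(Z)$. The only cosmetic difference is that you merge the components of $G_d(Z)\setminus S$ into two sides $A$ and $B$, whereas the paper works with one component against the rest.
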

\begin{proof} Assume by contradiction that $G(Z)$ is not $(d+1)$-connected. 
Then, there exists a subset $D$ of $d$-simplexes in $\supp(Z)$,  $|D| \leq d$, 
such that the removal of the vertices corresponding to $D$ in $G(Z)$ disconnects the graph. Let
$V_1,\ldots,V_r \subset V$, $r>1$, be the vertex sets of the resulting connected components,
and let $S_1,\ldots, S_r$ be the corresponding sets of $d$-simplices in $\supp(Z)$. Finally, given that $Z=\sum c_j \sigma_j$, define $d$-chains 
$Z_i = \sum_{\sigma_j \in S_i} c_j \sigma_j$. 

By definition of $G(Z)$, different $S_i$'s have disjoint $(d-1)$-supports. 
Keeping in mind that $Z$ is a $d$-cycle, this implies that the $(d-1)$-boundaries $C_i = \partial Z_i$ are all supported on $D$. Since every $(d-1)$-boundary is a $(d-1)$-cycle, Lemma~\ref{cl:d-cycle-conn} 
applies to $C_i$'s, implying, in particular, that there exists a $d$-chain $B_1$ supported on $D$ such that 
$\partial B_1 = C_1$. Consequently, the $d$-chain $Z_1 - B_1$ is a $d$-cycle, as 
$\partial(Z_1 - B_1) = C_1 - C_1 = 0$. Since $Z_1$ and $B_1$ have disjoint supports, $Z_1 - B_1 \neq 0$. 
Also, $Z_1 - B_1$ is supported on $K_1 \cup D$, a strict
subset of $d$-faces of $Z$. This contradicts the fact that $Z$ is simple cycle, concluding
the proof.
\end{proof}
\begin{remark}
\label{rm:mixed}
The above argument yields, in fact, a slightly more robust type of connectivity than stated. 
Recall that Lemma~\ref{cl:d-cycle-conn} applies not only to $D$ as in the statement of Theorem~\ref{thm:d-cycle-conn},
but also to a union of $r$  $d$-simplices and $q$ $(d-1)$-simplices, where $r+q \leq d$. Thus, the graph $G(Z)$ remains connected after removal of any $r$ vertices and $q$ edges (or, more precisely, the edges of any $q$ cliques induced by  $(d-1)$-faces of $Z$), as long as $r+q \leq d$. 
\end{remark}
Theorem~\ref{thm:d-cycle-conn} is tight, e.g., for $d$-pseudomanifolds, i.e., simple
$d$-cycles, where every $(d-1)$-face is included in exactly two 
$d$-faces. In this case $G_d$ is $(d+1)$-regular, and thus at most $(d+1)$-connected.
For $d=1$, all simple cycles are pseudomatifolds, and thus they are exactly $2$-connected.
For $d>2$, other simple cycles exist, and it not presently clear to us whether such cycles can be more than $(d+1)$-connected, and if yes, by how much.  

Theorem~\ref{thm:d-cycle-conn} has an immediate implication on connectivity of the facet graphs of {\em $d$-biconnected} sets $S$ of $d$-complexes. This interesting notion originates in Matroid Theory, and generalizes the graph-theoretic 2-(edge)-connectivity.

Let $S$ be a set of $d$-simplices. Define the following relation on $S$:
\begin{definition}
\label{def:bi-con}
$\sigma \sim \zeta$ if there is a 
{\em simple} cycle $Z$ supported on $S$ containing both $\sigma$ and 
$\zeta$. Treating $\{\sigma, - \sigma\}$ as a simple cycle, it is also postulated 
that $\sigma \sim \sigma$.
\end{definition}

It is known from Matroid Theory~\cite{oxley} that $\sim$ is an equivalence relation.
Call $S$ {\em bi-connected} if all its $d$-simplices are $\sim$ equivalent.

\begin{corollary}
\label{cor:bi-con}
For biconnected $S$ as above, $G_d(S)$ is $(d+1)$-connected.
\end{corollary}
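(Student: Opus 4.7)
The plan is to reduce biconnectivity of $S$ to the already-established $(d+1)$-connectivity of simple cycles (Theorem~\ref{thm:d-cycle-conn}). Suppose for contradiction that $G_d(S)$ has a vertex cut $D$ with $|D|\le d$; then there exist two $d$-simplices $\sigma,\zeta \in S\setminus D$ lying in distinct connected components of $G_d(S)-D$. I would argue that any path between them in any auxiliary subgraph of $G_d(S)$ is in particular a path in $G_d(S)-D$, and then exhibit such a path inside a suitable simple cycle.

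The key observation is that for every simple $d$-cycle $Z$ supported on $S$, the facet graph $G(Z)=G_d(Z)$ is an induced subgraph of $G_d(S)$: its vertices $\supp(Z) \subseteq S$ are vertices of $G_d(S)$, and two such vertices are adjacent in $G(Z)$ iff the corresponding $d$-simplices share a $(d-1)$-face, which is exactly the adjacency rule in $G_d(S)$. Consequently, for any subset $D$ of vertices, $G(Z)-D$ is a subgraph of $G_d(S)-D$.

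Now invoke biconnectivity: since $\sigma\sim\zeta$, there is a simple $d$-cycle $Z$ supported on $S$ with $\sigma,\zeta \in \supp(Z)$. By Theorem~\ref{thm:d-cycle-conn}, $G(Z)$ is $(d+1)$-connected. Removing from $G(Z)$ the at most $d$ vertices of $D\cap \supp(Z)$ therefore leaves a connected graph containing both $\sigma$ and $\zeta$ (note $G(Z)$ has at least $d+2$ vertices, since the smallest simple $d$-cycle is $\partial\Delta^{d+1}$). Pulling the resulting $\sigma$-to-$\zeta$ path back into $G_d(S)-D$ via the inclusion above contradicts the assumption that $\sigma$ and $\zeta$ lie in different components of $G_d(S)-D$.

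There is no real obstacle here; the only thing to be careful about is the induced-subgraph observation that links $G(Z)$ and $G_d(S)$, which is immediate from the definitions but must be stated to legitimately transfer the $(d+1)$-connectivity of $G(Z)$ to the conclusion about $G_d(S)$. The argument does not require $Z$ to be chosen uniformly; a different witnessing simple cycle may be used for each pair $\sigma,\zeta$, so biconnectivity (rather than the stronger assumption that all simplices of $S$ lie on a single simple cycle) suffices.
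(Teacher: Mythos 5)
Your argument is correct and is exactly the derivation the paper has in mind (the paper states Corollary~\ref{cor:bi-con} without proof, treating it as immediate from Theorem~\ref{thm:d-cycle-conn} in precisely the way you spell out: the facet graph of a witnessing simple cycle is an induced subgraph of $G_d(S)$ on at least $d+2$ vertices, so a cut of size at most $d$ cannot separate two $\sim$-equivalent simplices). Nothing further is needed.
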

%
\subsection{Connectivity of Hypertrees}
Next, we establish the $(d-1)$-connectivity of  $d$-hypertrees.   
\begin{theorem}
  \label{thm:d-tree-conn}
Let $T$ be a $d$-hypertree in $K_n^d$,  $d\geq 1, ~n\geq d+2$. Then, its facet graph 
$G(T)=G_d (T)$ is $d$-connected.  
\end{theorem}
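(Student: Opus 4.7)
I will approach this by contradiction, in the spirit of the proof of Theorem~\ref{thm:d-cycle-conn}, using the $(d+1)$-connectivity of the fundamental cycles of $T$ as the main lever. Suppose $D\subseteq T$ with $|D|=s\leq d-1$ disconnects $G_d(T)$ into components whose $d$-simplex sets are $S_1,\dots,S_r$ with $r\geq 2$. Let $F_D$ be the set of $(d-1)$-faces of simplices in $D$, and $F_i$ the set of $(d-1)$-faces of $S_i$-simplices contained in no simplex of $D$. Since the $S_i$ are distinct components of $G_d(T\setminus D)$, and $T$ has full $(d-1)$-skeleton, $\{F_D,F_1,\dots,F_r\}$ partitions the $(d-1)$-simplices over $[n]$.

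The key technical step is to prove that \emph{for every $\sigma\in K_n^d$, all $d+1$ of its $(d-1)$-faces lie in $F_D\cup F_{k(\sigma)}$ for a single index $k(\sigma)$}. When $\sigma\in T$ this is immediate from the definitions. When $\sigma\notin T$, I exploit the hypertree property to form the unique simple $d$-cycle $Z_\sigma=\sigma-\capp_T(\sigma)$ supported on $T\cup\{\sigma\}$. By Theorem~\ref{thm:d-cycle-conn}, $G_d(Z_\sigma)$ is $(d+1)$-connected. Removing the at most $d$ vertices $D\cup\{\sigma\}$ preserves connectivity (the simple cycle $Z_\sigma$ has at least $d+2$ $d$-simplices), so the surviving vertex set $\supp(\capp_T(\sigma))\setminus D\subseteq T\setminus D=\bigsqcup_k S_k$ must lie entirely in a single component $S_{k(\sigma)}$. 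Hence $\supp(\capp_T(\sigma))\subseteq D\cup S_{k(\sigma)}$, and from $\partial\sigma=\partial\,\capp_T(\sigma)$ the faces of $\sigma$ are confined to $F_D\cup F_{k(\sigma)}$.

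With the Key Claim in hand, any $\tau_1\in F_1,\ \tau_2\in F_2$ differing in exactly one vertex would both be faces of the single $d$-simplex $\tau_1\cup\tau_2$, forcing $k(\tau_1\cup\tau_2)$ to equal both $1$ and $2$. So no such pair exists. In Johnson graph language --- with vertices the $(d-1)$-simplices over $[n]$ and edges between those sharing a $(d-2)$-face --- this forces $J(n,d)\setminus F_D$ to be disconnected. But $J(n,d)$ has vertex connectivity $d(n-d)$, while $|F_D|\leq(d+1)s\leq d^2-1$; so for $n\geq 2d$ one has $|F_D|<d(n-d)$ and the contradiction is immediate. The boundary case $n=d+2$ is trivial: any two $(d+1)$-subsets of $[d+2]$ intersect in $d$ elements, so $G_d(T)=K_{d+1}$ is $d$-connected by inspection.

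\textbf{Main obstacle.} The delicate remaining work lies in the intermediate range $d+3\leq n<2d$ (relevant only for $d\geq 4$), where the naive Johnson connectivity bound $d(n-d)$ falls short of $|F_D|$. Closing this gap demands a finer structural argument exploiting the fact that $F_D$ is not an arbitrary subset of $J(n,d)$ but rather the union of the $(d-1)$-face sets of only $s\leq d-1$ specific $d$-simplices --- a highly constrained configuration that should still preserve the connectivity of $J(n,d)\setminus F_D$. This is the step where the combinatorial core of the theorem is concentrated.
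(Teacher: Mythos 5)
Your Key Claim is sound and coincides with the first step of the paper's own proof: extending the component-colouring of $T\setminus D$ to all of $K_n^d$ via the fundamental cycles $Z_\sigma=\sigma-\capp_T(\sigma)$ and Theorem~\ref{thm:d-cycle-conn} is exactly the right move, and your verification (nonemptiness of $\supp(\capp_T(\sigma))\setminus D$, confinement of $\partial\sigma$ to $F_D\cup F_{k(\sigma)}$) is correct. The problem is the final step, and you have named it yourself: projecting down to the Johnson graph $J(n,d)$ on $(d-1)$-simplices and deleting all of $F_D$ is too expensive, because each of the $s\leq d-1$ simplices of $D$ blocks up to $d+1$ vertices of $J(n,d)$, so $|F_D|$ can reach $d^2-1$ while the connectivity of $J(n,d)$ is only $d(n-d)$. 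This genuinely fails as a counting argument throughout $d+3\leq n\leq 2d-1$ (so already for $d=4$, $n=7$), and the "finer structural argument" you defer to is precisely the missing content; as written, the proof is incomplete.

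The paper closes this gap by never leaving the level of $d$-simplices. From (the analogue of) your Key Claim it follows that two adjacent $d$-simplices $\sigma,\sigma'\notin D$ whose shared $(d-1)$-face is \emph{not} in $F_D$ must have the same colour; the only remaining danger is a pair whose shared face \emph{is} a face of some simplex of $D$. This is handled locally: $\sigma$ and $\sigma'$ lie in the boundary of a unique $(d+1)$-simplex $\psi$, and $G_d(\Delta\psi)\cong K_{d+2}$; each simplex of $D$ marks at most one vertex or one edge of this $K_{d+2}$, so at most $d-1$ vertices and edges in total are marked, which cannot disconnect $K_{d+2}$ (the same counting as in Claim~\ref{cl:simplex}). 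Hence adjacent coloured $d$-simplices always agree, and one concludes by removing only the $|D|\leq d-1$ vertices of $D$ from $G_d(K_n^d)$, which is (at least) $(d+1)$-connected by Corollary~\ref{cor:bi-con}. The point is that in the hypersimplex graph each element of $D$ costs one vertex, not $d+1$ vertices, so no counting problem arises for any $n\geq d+2$. If you want to salvage your route, you would need to show that $J(n,d)$ minus the union of the face-sets of $d-1$ specific $d$-simplices stays connected; this is plausible but is essentially equivalent in difficulty to the local $K_{d+2}$ argument you are trying to avoid.
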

\begin{proof} 
As before, it suffices to show that for any subset $X$ of $d$-simplices of $T$,  $|X| \leq d-1$, 
the removal of the vertices corresponding to the $X$ from $G(T)$ does not disconnect the graph. 
Consider such $X$, let $V_1,\ldots,V_r \subset V$, be the vertex sets of the resulting connected 
components in $G(T)$, and let $\K_1,\ldots, \K_r$ be corresponding sets of $d$-simplices in $T$.
Let also $\K_0=X$. We shall prove that $r$ must be $1$, and thus $X$ is non-separating, as required.

For $i = 1, \ldots ,r$ let us color  all $d$-faces of $\K_i$, by  color $i$. In particular, every $d$-face of $T$  $\sigma \notin X$ has a (unique) associated color, while $X$ is colorless. 

{\it The first step} is to extend this colouring of $T$ to all $d$-simplices in  $K_n^d \without X$ in 
the following manner. 
Let $\sigma \in K_n^d \without T$~ be a $d$-simplex. As explained in Section~\ref{sec:prelim}, there
is a (unique) $d$-chain $\capp_T(\sigma)$ supported on $T$ satisfying
$\partial(\capp(\sigma)) = \partial(\sigma)$, namely $Z_\sigma = \capp_T(\sigma) - \sigma$~ is a simple $d$-cycle. 
Since any non-empty $d$-cycle is of size $\geq d+1$,  and $|X| \leq d-1$, it follows that
$\supp(Z_\sigma) \without \{ X \cup \sigma \} \subseteq T$ is not empty, and so $\capp(\sigma)$ 
must contain some coloured $d$-simplices in $T$. We claim that {\em all} such $d$-simplices must 
have the same color. This color will be assigned to $\sigma$.

Indeed, by Theorem~\ref{cl:d-cycle-conn}, the graph $G(Z_\sigma)$ is $(d+1)$-connected.
Since $|X| \leq d-1$, it remains connected after the removal of $d$ vertices corresponding
to $\{\sigma\} \cup X$. I.e., for any two coloured $d$-simplices 
$\zeta, \tau \in \capp(\sigma) \without X \subset T$, 
there exists a path $\xi_1,\xi_2,\ldots,\xi_s$ of (coloured) $d$-simplices in 
$\capp(\sigma) \without X  \subset T$
where $\xi_1=\zeta$, $\xi_s = \tau$, and every two consecutive $\xi_i, \xi_{i+1}$ share
a $(d-1)$-dimensional face. By definition of $\K_i$'s, if a pair of coloured $d$-faces of $T$
share a $(d-1)$-dimensional face, then they have the same color. Hence, all $\xi_i$'s, and in particular $\zeta$ and $\tau$, have the same color. 

Having constructed a consistent extension of the colouring of $T\without X$ to the entire 
$K_n^d \without X$, it will be convenient to extend the definition of $\K_i$'s to contain all
$d$-simplices of $K_n^d$ coloured $i$. The set $\K_0=X$ remains unaffected.

{\it The second step} is to show that any two adjacent (i.e., sharing a $(d-1)$-face) 
coloured $d$-simplices $\sigma_i, \sigma_j \in K_n^d \setminus X$ have the same
color. While in $T\setminus X$
this is immediately implied by the definition of the color classes, in
$K_n^d\setminus X$ a proof is required. 

Assume by contradiction that $\sigma_i$ and $\sigma_j$ have different colours. Let $\tau_{ij}$ be the $(d-1)$-face they share, 
and let $\zeta_i \in \capp(\sigma_i)$, $\zeta_j \in \capp(\sigma_j)$ be $d$-simplices in $T$ so that 
$\sigma_i \cap \zeta_i = \sigma_j \cap \zeta_j = \tau_{ij}$. Obviously
there are such $\zeta_i, \zeta_j$ by the definition of a cap.  Now, on one hand, $\zeta_i$ and $\zeta_j$ are adjacent,
and so, if both are colourful, it must be the same color. In addition,
this color must be the same 
as this of $\sigma_i, \sigma_j$ by consistency of the color extension. Thus, if $\sigma_i$ and $\sigma_j$ differ in
color, then at least one of $\zeta_i, \zeta_j$ must belong to $X$. In particular, $\tau_{ij}$ is a $(d-1)$-facet of
some $d$-simplex in $X$.

Let $\psi$ be the (unique) $(d+1)$-simplex containing both $\sigma_i$ and $\sigma_j$,
and let $\Delta\psi$ denote the support of its boundary. In particular, $\sigma_i, \sigma_j \in \Delta\psi$.
Consider $G_d(\Delta\psi)$, whose vertices are coloured according to the colours of the corresponding $d$-simplices, and 
the vertices and the edges corresponding respectively to $d$- and
$(d-1)$-faces of $K(X)$, are marked. As explained above, any 
two colourful vertices of $G_d(\Delta\psi)$ connected by an unmarked edge must be of the same color. Thus, showing that 
any two colourful vertices of this graph are connected by an unmarked path, will imply that there is only one color, contrary
to the assumption. 

Observe that $G_d(\Delta\psi)$ is isomorphic to $K_{d+2}$. Observe also that any $d$-simplex of $X$ may cause 
the marking of a single vertex, or, alternatively, of a single edge of $G_d(\Delta\psi)$. Since 
$|X| \leq d-1$, this amounts to at most $d-1$ vertices and edges altogether. However, by an argument 
already used in the proof of Claim~\ref{cl:d-cycle-conn}, removing all marked vertices and edges is not 
enough to disconnect $K_{d+2}$. Thus, any two coloured vertices are indeed connected by an
unmarked path, and thus the adjacent $\sigma_i$ and $\sigma_j$ must be of same color.

{\it To sum up}, we have shown so far that each 
$d$-simplex $\sigma \in K_n^d \without X$ has a well defined color,
and that every two adjacent coloured $d$-simplices have the same color.  Recall that the goal is to show that there is only one color.
Hence, to conclude the proof, it suffices to show that the facet graph $G_d(K_n^d)$ remains connected after removal of the vertices 
corresponding to the $d$-faces of $X$, i.e., that $G_d(K_n^d)$ is $(d+1)$-connected. One way of doing it is by observing that the 
$d$-skeleton of $K_n^d$ is biconnected, and then applying Corollary~\ref{cor:bi-con}.
Since $G_d(K_n^d)$ is obviously connected, by transitivity of $\sim$, it suffices
to check that any two adjacent $\sigma,\zeta$ are contained in a simple $d$-cycle. 
And indeed, they are contained in the boundary of the (unique) $(d+1)$-simplex 
containing both.

{\it In fact}, ~$G_d(K_n^d)$ is familiar in Combinatorics as
the graph of the hypersimplex polytope $\Delta_d(n)$, or as the graph
of the $(d+1)$'th slice of $n$-hypercube, where two strings are adjacent
iff they are at Hamming distance 2. See~\cite{bollo} for a relevant discussion.
The results of~\cite{athanathos} imply that this graph is $(d+1)(n-d-1)$-connected. 
The proof of~\cite{athanathos} involves an intricate geometric argument. For completeness, we attach in  
Appendix B an alternative simple combinatorial proof of this fact. 
\end{proof}

To establish the tightness of Theorem~\ref{thm:d-tree-conn},
consider first the star $T = \{\sigma \in K_n^d ~| ~ n \in
\sigma \}$. This is a $d$-hypertree:  it obviously spans
all the $d$-simplices in $K_n^d$. On the other hand, it is acyclic, as
every $\sigma \in  T$ contains an exposed face, namely $(\sigma \without n)$.  
Now, consider, e.g., the hypertree $T' = T \without \{\sigma\} \cup \{\zeta\}$ 
where $\sigma= (1, \ldots ,d,n)$, and $\zeta=(1, \ldots , d+1)$. It is easy to 
verify that $T'$ is indeed a hypertree. Observe that the $(d-1)$-face $(1,\ldots ,d)$ of $\zeta$ is exposed in $T'$, while every other $(d-1)$-face
of $\zeta$ is shared with a single $d$-simplex in $T'$. Hence, the vertex corresponding to 
$\zeta$ in $G_d(T')$ has degree $d$, implying that this graph is not $(d+1)$-connected.

Let us remark that the facet graph of a $d$-hypertree can be  more 
than $d$-connected. E.g., when $T$ is a star as above, $G_d(T)$ is obviously isomorphic to 
$G_d(K_{n-1}^{d-1})$, which is $d(n-d-2)$-connected by Theorem~\ref{thm:kn}.

Motivated by the dual definition of $r$-edge-connectivity in graphs,
namely that $G$ is $r$-edge connected if and only if  every cut (of the complete
graph) intersects $E[G]$ in at least $r$ edges, we introduce the
following definition. A $d$-complex $K$ will be called {\em $r$-connected} if
for every $d$-hypercut $H$, $|H \cap K^{(d)}| \geq r$. 

It immediately follows from Theorem~\ref{thm:d-tree-conn} that:
\begin{corollary}
\label{cor:conn}
For  $d\geq 1$, if a $d$-complex $K$ is  $r$-connected  then, $G_d(K)$ is $(d + r - 1)$-connected.
\end{corollary}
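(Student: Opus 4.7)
The plan is to reduce the corollary to Theorem~\ref{thm:d-tree-conn} via a well-chosen hypertree. Given $X \subseteq K^{(d)}$ with $|X| \leq d+r-2$, I would split $X = X_1 \sqcup X_2$ with $|X_1| \leq r-1$ and $|X_2| \leq d-1$, and locate a $d$-hypertree $T$ of $K_n^d$ lying entirely inside $K^{(d)} \setminus X_1$. The existence of such a $T$ is the key matroidal observation: $r$-connectedness of $K$ says that no subset of $K^{(d)}$ of size $\leq r-1$ contains a $d$-hypercut, and by the standard duality between cocircuits and cospanning sets in $\mathcal{M}_d$, this is equivalent to $K^{(d)} \setminus X_1$ containing a basis of $\mathcal{M}_d$, i.e., a hypertree.

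Having fixed such a $T$, the next step is to apply Theorem~\ref{thm:d-tree-conn} directly. Since $T \cap X_1 = \emptyset$, the intersection $T \cap X$ equals $T \cap X_2$ and has size at most $d-1$, so the $d$-connectivity of $G_d(T)$ forces $G_d(T) \setminus (T \cap X)$ to be connected. As $T \subseteq K^{(d)}$, this subgraph sits inside $G_d(K) \setminus X$, so all simplices of $T \setminus X$ are already in one component of $G_d(K) \setminus X$.

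The final step is to attach each remaining $\sigma \in K^{(d)} \setminus (T \cup X)$ to this component. For this I would use the simple $d$-cycle $Z_\sigma = \capp_T(\sigma) - \sigma$, whose support lies in $T \cup \{\sigma\} \subseteq K^{(d)}$ and intersects $X$ only in $T \cap X$, so in at most $d-1$ vertices. By Theorem~\ref{thm:d-cycle-conn}, $G_d(Z_\sigma)$ is $(d+1)$-connected and therefore stays connected after removal of these vertices. A size check using $|\supp(Z_\sigma)| \geq d+2$ gives $|\supp(Z_\sigma) \cap T \setminus X| \geq 2$, so the resulting connected subgraph contains $\sigma$ alongside members of $T \setminus X$, tying $\sigma$ into the component of $T \setminus X$ in $G_d(K) \setminus X$. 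The only semi-subtle ingredient is the matroidal hypertree-existence argument, but it is an immediate consequence of the standard characterization of cocircuits as the minimal sets whose complements fail to span, so I do not anticipate any genuine obstacle.
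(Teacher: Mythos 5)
Your proof is correct and follows essentially the same route as the paper: spend $r-1$ of the removal budget to find a $d$-hypertree $T$ avoiding $X_1$ (using that every hypercut meets $K^{(d)}$ in at least $r$ simplices), then invoke the $d$-connectivity of $G_d(T)$ for the remaining $d-1$ removed simplices. The only difference is that you explicitly supply the cap-cycle attachment step connecting the simplices of $K^{(d)}\setminus(T\cup X)$ to the surviving component of $T$ — a step the paper's one-line appeal to Theorem~\ref{thm:d-tree-conn} leaves implicit — which makes your write-up, if anything, more complete.
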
 
\begin{proof}
Assume by contradiction that there is a set of $d$-simplices $D =
\{\sigma_1, \ldots , \sigma_{d+r-2} \}$ whose 
removal disconnect $G_d(K)$. Remove first $D'= \{\sigma_1, \ldots
  \sigma_{r-1} \}$ from $K$. Since by assumption, every hypercut has size
  at least $r$ in $K$, $K \setminus D'$ still contains a $d$-tree. But
  then $G_d(K \setminus D')$ is $d$-connected by Theorem
  \ref{thm:d-tree-conn}, and hence it remains connected after the removal
  of the next $d-1$ simplices in $D \setminus D'$.
\end{proof}
To demonstrate the usefulness of Corollary~\ref{cor:conn}, apply it to $K=K_n^d$. 
Since the mincut in this case is of size $n-d$ (see, e.g.,~\cite{soda}), one concludes
that $G_d(K_n^d)$ is at least $(n-1)$-connected (which is still far from being 
tight, by Theorem~\ref{thm:kn}).

Another implication of Theorem~\ref{thm:d-tree-conn} is about the connectivity of complements of 
$d$-hypercuts.
\begin{corollary}
\label{cor:comp-hyper}
Let $H \subset K_n^d$ be a $d$-hypercut, $d\geq 1$, and let $\overline{H}$ contain all $d$-simplices missed by $H$.
Then, $G_d(\overline{H})$ is $(d - 1)$-connected.
\end{corollary}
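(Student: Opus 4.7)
The plan is to enlarge $\overline{H}$ into a $d$-hypertree by a single extra simplex and then invoke Theorem~\ref{thm:d-tree-conn}. By basic matroid theory (see~\cite{oxley}), $H$ being a minimal cocycle of ${\cal M}_d$ means that every maximal acyclic subset $T' \subseteq \overline{H}$ has cardinality ${n-1 \choose d} - 1$, while $\overline{H} \cup \{\sigma_0\}$ contains a $d$-hypertree for every $\sigma_0 \in H$. Fix such a $\sigma_0 \in H$ and a maximal acyclic $T' \subseteq \overline{H}$: then $\sigma_0$ is not a linear combination of the elements of $T'$ (since $T'$ and $\overline{H}$ share the same linear span, and $\sigma_0$ lies outside it), so $T := T' \cup \{\sigma_0\}$ is acyclic and of size ${n-1 \choose d}$, i.e., a $d$-hypertree.

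By Theorem~\ref{thm:d-tree-conn}, $G_d(T)$ is $d$-connected, and removing the vertex $\sigma_0$ leaves $G_d(T')$ which is therefore $(d-1)$-connected. To link the rest of $G_d(\overline{H})$ to this core, observe that maximality of $T'$ forces, for every $\tau \in \overline{H} \setminus T'$, a simple $d$-cycle $Z_\tau \subseteq T' \cup \{\tau\}$ containing $\tau$. From $\partial Z_\tau = 0$ one reads that each of the $d+1$ facets $\eta$ of $\tau$ must be shared with at least one other $d$-simplex of $Z_\tau$---otherwise the coefficient of $\eta$ in $\partial Z_\tau$ would be the nonzero $[\tau : \eta]\, c_\tau$. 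Since any two distinct $d$-simplices share at most one $(d-1)$-face, distinct facets of $\tau$ yield distinct neighbors, so $\tau$ has at least $d+1$ neighbors in $T'$ within $G_d(\overline{H})$.

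Finally, let $X \subseteq \overline{H}$ with $|X| \leq d-2$. The $(d-1)$-connectivity of $G_d(T')$ ensures $G_d(T' \setminus X)$ is connected, and each $\tau \in \overline{H} \setminus (T' \cup X)$ retains at least $d + 1 - (d-2) = 3$ of its neighbors in $T' \setminus X$, hence is joined to this connected subgraph inside $G_d(\overline{H} \setminus X)$. Therefore $G_d(\overline{H} \setminus X)$ is connected, establishing $(d-1)$-connectivity. The main substantive step is the matroidal bookkeeping---justifying the cardinality of $T'$ and the existence of the simple cycles $Z_\tau$---but both are standard consequences of the preliminaries; the small-$n$ cases in which ${n-1 \choose d}-1 < d$ need only a brief separate verification.
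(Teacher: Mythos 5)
Your proof is correct, and it follows the same overall route as the paper: augment $\overline{H}$ so that it contains a $d$-hypertree, invoke Theorem~\ref{thm:d-tree-conn}, and delete the augmenting simplex. In fact your write-up is more complete than the paper's. The paper picks $\sigma\in H$ and a hypertree $T_\sigma$ with $T_\sigma\cap H=\{\sigma\}$, and then asserts that removing the vertex $\sigma$ from $G_d(T_\sigma)$ ``leaves us with $G_d(\overline{H})$''; literally this produces only $G_d(T_\sigma\setminus\{\sigma\})$, and since $\overline{H}$ is the hyperplane complementary to the cocircuit $H$ it in general properly contains $T_\sigma\setminus\{\sigma\}$ (already for $d=1$, $n=4$ with $H$ a vertex star: $|\overline{H}|=3$ while $|T_\sigma\setminus\{\sigma\}|=2$). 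So one still has to attach the simplices of $\overline{H}\setminus T_\sigma$ to the $(d-1)$-connected core, and your fundamental-circuit step does exactly that: for $\tau\in\overline{H}\setminus T'$ the relation $\partial Z_\tau=0$ forces each of the $d+1$ facets of $\tau$ to be covered by a distinct element of $T'$, so after deleting at most $d-2$ vertices $\tau$ still has a neighbour in the connected core $G_d(T'\setminus X)$. Your matroidal bookkeeping is also sound ($\overline{H}$ is a flat of rank ${n-1\choose d}-1$, so $T'\cup\{\sigma_0\}$ is a basis); the only superfluous caution is the final caveat about ${n-1\choose d}-1<d$, which never occurs since ${n-1\choose d}\ge d+1$ for all $n\ge d+2$.
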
 
\begin{proof} Recall that $H$, being a hypercut, is critical with respect to hitting $d$-hypertrees, i.e., it hits
every such $T$. Moreover, for any $\sigma \in H$ there exists a $d$-hypertree $T_\sigma$ such that 
$T_\sigma \cap H = \{\sigma\}$. Hence, augmenting $\overline{H}$ by any 
$\sigma \not\in \overline{H}$ makes it contain a $d$-hypertee $T_{\sigma}$. Hence
by Theorem \ref{thm:d-tree-conn}, the corresponding facet graph
$G_d(T_{\sigma})$ is
$d$-connected. Removing the extra vertex corresponding to $\sigma$ form this graph, leaves us with
$G_d(\overline{H})$, that must be $(d-1)$-connected. 
\end{proof}
While for $d=1$, Corollary~\ref{cor:comp-hyper} is trivially tight, it appears that for $d\geq 2$ it can be 
significantly strengthened. This is left as an open problem.
%
\subsection{Connectivity of Hypercuts and Cocycles}\label{sec:5}
\label{sec:cuts}
\begin{theorem}
 \label{thm:d-cocycle-conn}
Let $H$ be a $d$-hypercut, $d\geq 1$. Then, its facet graph $G_d (H)$ is $(n-d-1)$-connected.  
\end{theorem}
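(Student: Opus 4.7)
The plan is to reduce the statement to Theorem~\ref{thm:d-cycle-conn} via the cycle-hypercut duality established in Lemma~\ref{lm:dual}. The duality in $K_n^{n-1}$ sends an $(r-1)$-hypercut to a simple $(k-1)$-cycle with $k+r=n$, and does so while preserving the isomorphism type of the facet graph. Matching parameters to the present setting, I set $r-1 = d$, so $r = d+1$ and $k = n-d-1$, which makes $k-1 = n-d-2$.

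First I would invoke Lemma~\ref{lm:dual} with these parameters: the given $d$-hypercut $H$ corresponds via $*$ to some simple $(n-d-2)$-cycle $Z = H^*$ in $K_n^{n-1}$, and the lemma directly yields an isomorphism $G_d(H) \cong G_{n-d-2}(Z)$.

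Next I would apply Theorem~\ref{thm:d-cycle-conn} in dimension $n-d-2$: the facet graph of a simple $(n-d-2)$-cycle is $((n-d-2)+1)$-connected, i.e.\ $(n-d-1)$-connected. Combined with the isomorphism above, this yields $(n-d-1)$-connectivity of $G_d(H)$, as required.

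Since the machinery has already been set up, no real obstacle remains; the only small technical point is to verify that the hypothesis $d \geq 1$ in the theorem statement, together with a nondegenerate range for $n$ (specifically $n \geq d+2$, so that a $d$-hypercut exists in $K_n^d$ at all), translates to a valid dimension $n-d-2 \geq 0$ for the dual cycle, so that Theorem~\ref{thm:d-cycle-conn} applies cleanly. If one wanted a proof that avoids going through $K_n^{n-1}$, an alternative would be to mimic the argument of Theorem~\ref{thm:d-cycle-conn} using Lemma~\ref{cl:d-cycle-conn} after dualizing: remove at most $n-d-2$ facets of $H$, take the resulting cocycle components $H_1, \dots, H_s$, observe that their coboundaries $\delta H_i$ are supported on a small set of $(d+1)$-simplices of $K_n^d$, and use the cochain analogue of Lemma~\ref{cl:d-cycle-conn} (which follows from the same lemma applied to the dual cycles) to produce a proper sub-cocycle of $H$, contradicting simplicity. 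However, given the machinery already built, the duality argument is much shorter and is the route I would take.
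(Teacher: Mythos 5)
Your proposal is correct and is essentially identical to the paper's own proof: both apply Lemma~\ref{lm:dual} to identify $G_d(H)$ with the facet graph of the simple $(n-d-2)$-cycle $H^*$ and then invoke Theorem~\ref{thm:d-cycle-conn}. Your added remark about checking that the dual dimension is in range (so that Theorem~\ref{thm:d-cycle-conn} applies) is a reasonable extra care that the paper glosses over.
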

\begin{proof}
This is an immediate consequence of the duality result of Lemma~\ref{lm:dual}, claiming that 
$H^* \subset K_n^d$ is a simple $(n-d-2)$-cycle with $G_{d}(H) = G_{n-d-2}(H^*)$, and an application of  
Theorem~\ref{thm:d-cycle-conn} to $H^*$. 
\end{proof}
For tightness, consider the following example. Let $\tau=(1,2,\ldots,d) \in K_n^d$ be
a $(d-1)$-face. Then, the $d$-cochain 
$H_\tau = \sum_{p\not\in \tau} \sign(\tau \cup p, \tau) \cdot (\tau \cup p)$ is 
a $d$-hypercut. Its graph $G_d(H_\tau)$ is an $(n-d)$-clique, which by convention is
$(n-d-1)$-connected.

The facet graphs of cocycles that are not hypercuts, behave very differently.
For $d=1$, the cocycles are precisely the graph-theoretic cuts, and so Theorem~\ref{thm:d-cocycle-conn} applies.
For $d\geq 3$, the facet graph of cocycle can be disconnected, as exemplified
by $H_\tau + H_{\tau'}$ as above, where the Hamming distance between $\tau$ and $\tau'$ {\em as sets} is at least $3$.

For $d=2$, the answer is given by the following theorem:
\begin{theorem}
  \label{thm:2-cob}
The facet graph of a (non-empty) $2$-cocycle $Z^*$ of $K_n^2$ is $2$-connected.
\end{theorem}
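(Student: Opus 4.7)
The plan is a proof by contradiction. Suppose $G_2(Z^*)$ is not 2-connected; then either (A) it is disconnected, or (B) it possesses a cut vertex $\sigma$. Let $V_1, V_2$ denote two distinct components of $G_2(Z^*)$ (case A) or of $G_2(Z^*)\setminus\{\sigma\}$ (case B), and decompose $Z^* = c_\sigma\,\sigma + Z^*_1 + Z^*_2$ where $Z^*_i$ is the restriction of $Z^*$ to $V_i$ (and $c_\sigma = 0$ in case A).

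The key observation, specific to $d=2$, is that any two triangular faces of a single tetrahedron share an edge and are thus facet-adjacent in $G_2(K_n^{n-1})$. Consequently, for every tetrahedron $\tau$ not containing $\sigma$, the triangles of $\tau$ lying in $\supp(Z^*)$ form a clique in $G_2(Z^*)$ and must lie in a single component $V_i$. Unpacking $\delta^2 Z^*(\tau) = 0$ with this fact yields, in case (A), that each $Z^*_i$ is itself a 2-cocycle; in case (B), it yields that $\delta^2 Z^*_i$ is supported only on tetrahedra of the form $\sigma \cup \{p\}$, with $\{A_1, A_2\}$ partitioning $[n]\setminus\sigma$ according to which component the non-$\sigma$ support-faces of $\sigma\cup\{p\}$ inhabit.

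The contradiction is extracted by analyzing the cocycle identities on the five tetrahedra of a cleverly chosen 4-simplex. For case (B), picking $p \in A_1, p' \in A_2$, I work inside the 4-simplex $\sigma\cup\{p,p'\} = \{a,b,c,p,p'\}$; component-consistency (if a triangle is simultaneously facet-adjacent to triangles in $V_1$ and $V_2$, it cannot be in $\supp(Z^*)$) progressively rules out candidate support-triangles, until one of the five cocycle identities degenerates to $\pm Z^*(t) = 0$ for a triangle $t$ known to be in $\supp(Z^*)$, a contradiction. For case (A), with $t_1 \in V_1, t_2 \in V_2$: if $|t_1\cap t_2|=2$ they are already facet-adjacent, so assume $|t_1\cap t_2|\leq 1$; when $|t_1\cap t_2|=1$ the single 4-simplex $t_1\cup t_2$ suffices, and when $t_1,t_2$ are vertex-disjoint, one combines the analyses of the three 4-simplices $t_1\cup\{v,v'\}$ for $\{v,v'\}\subseteq t_2$, concluding via the cocycle identity on a tetrahedron of the form $t_2\cup\{a\}$ for $a\in t_1$, which by prior eliminations reduces to $\pm Z^*(t_2)=0$.

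The main obstacle is the combinatorial bookkeeping in the vertex-disjoint sub-case of (A): constraints from three separate 4-simplex analyses must be merged so that every candidate "bridge" triangle is provably excluded, with no cocycle identity satisfied trivially by cancellation before yielding the contradiction. The specialness of $d=2$ manifests at exactly this step — the ambient 4-simplex has only $5$ vertices, which keeps the propagation tractable. The analogous higher-dimensional argument would need an ambient simplex on $\ge 7$ vertices, and indeed the theorem fails for $d\geq 3$ as illustrated by $H_\tau + H_{\tau'}$ with $\tau,\tau'$ far apart.
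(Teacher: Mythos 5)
Your proof takes a genuinely different route from the paper's, and after the bookkeeping is carried out it does work. The paper dualizes: $Z^*$ corresponds to an $(n-4)$-cycle $Z$ of $K_n^{n-4}$ with $G_2(Z^*)\cong G_{n-4}(Z)$, decomposes $Z$ into simple cycles, shows by induction on dimension (via links) that any two of these simple cycles share a codimension-one face, and then assembles two disjoint paths from the $(d+1)$-connectivity of each simple cycle (Theorem~\ref{thm:d-cycle-conn}) together with the $K_4$ produced by a shared $(d-1)$-face. You avoid duality and the cycle decomposition entirely, arguing by contradiction from two local facts: any two triangles of a tetrahedron are facet-adjacent (so the support triangles of each tetrahedron lie in a single component), and the cocycle identity on each tetrahedron. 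I checked the critical eliminations and they do close. In case (B), with $\sigma=abc$, $p\in A_1$, $p'\in A_2$: after a harmless WLOG one gets $Z^*(abp)\neq 0$, hence $Z^*(abp')=0$ (both sit in the tetrahedron $abpp'$), then $Z^*(acp')\neq 0$ and $Z^*(acp)=0$; the triangle $app'$ is wedged between the two components and vanishes, and the identity on $acpp'$ collapses to $\pm Z^*(acp')=0$ --- exactly the degeneration you predict. In case (A) with $|t_1\cap t_2|=1$, the four bridge triangles vanish and the tetrahedron on the remaining four vertices is forced to contain adjacent support triangles from both components, a contradiction. The trade-off: the paper's route yields structural information (where the two disjoint paths actually live) and reuses Theorem~\ref{thm:d-cycle-conn}, while yours is elementary, self-contained, and makes transparent why $d=2$ is the critical dimension.

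One soft spot: your endgame for the vertex-disjoint sub-case of (A) --- forcing all of $ade,adf,aef$ to vanish so that the identity on $adef$ yields $Z^*(def)=0$ --- does not go through as literally stated. The triangle $ade$ is excluded only if it is adjacent to a triangle known to lie in $\supp(Z^*)\cap V_1$, and the $V_1$-triangles guaranteed by the identities on $abcd$ and $abce$ need not contain the vertex $a$ (e.g.\ they could be $bcd$ and $bce$), in which case $ade$ survives. The fix is immediate inside your own framework: the identity on $adef$ forces some triangle $\{a\}\cup e$, with $e$ an edge of $def$, into $\supp(Z^*)\cap V_2$, and that triangle meets $t_1=abc$ in exactly one vertex, so the already-settled one-common-vertex sub-case applies to the pair $(abc,\,\{a\}\cup e)$. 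With that substitution, and an explicit word about the degenerate cases $n\le 4$ (where the facet graph can be a $K_2$ and the statement is a matter of convention, as the paper notes), the argument is complete.
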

\begin{proof}
It is immediate to verify the claim for $n \leq 4$, and thus we assume $n\geq 5$.
To simplify the discussion, we use the duality between cocycles and cycles, as stated in Claim~\ref{cl:coboundary} and Lemma~\ref{lm:dual}. Let $Z$ be the dual chain of $Z^*$. Then, $Z$ is a $d$-cycle, $d=n-4\geq 1$, of $K_n^{n-4}=K_{d+4}^{d}$, and $G_{2}(Z^*) = G_{d}(Z)$.

Now, $Z$, being a cycle, can be represented as $Z=\sum Z_i$, where each $Z_i$ is a simple $d$-cycle,
and $\supp(Z_i) \subseteq \supp(Z)$. The key point of the argument is
that $K_{d+4}^{d}$ is a ``narrow'' place for $d$-cycles. We claim that for
any two $Z_i, Z_j$ as above, there exists a $(d-1)$-simplex $\tau$
belonging to $K(Z_i) \cap K(Z_j)$. 

The proof is by induction on $d$. No assumption about the simplicity the $d$-cycles $Z_i,Z_j$ is made 
or required. For $d=1$, one needs to show that any two cycles in $K_5^1$ have a
common vertex. This is obvious. For general $d\geq 2$, let $Z_1,Z_2$
be two $d$-cycles. Since each of $Z_1,Z_2$ contains 
at least $(d+2)$ $d$-simplices, and  since $2(d+2) > d+4$, they share a common vertex $v$. 
Assuming $Z_1=\sum c_\ell\sigma_\ell$, let 
$
C_1 ~=~ \link_v(Z_i) ~=~ \partial \left( \sum_{\sigma_\ell \owns v} c_\ell\sigma_\ell \right).
$
$C_1$ is a nonempty $(d-1)$-boundary, and hence a nonempty $(d-1)$-cycle. Moreover, keeping in 
mind that $\partial(Z_1)=0$, we conclude that the vertex $v$ does not
appear in the vertex set $V(C_1)$. The same applies 
to the similarly defined $C_2$. Thus, $C_1,C_2$ are $(d-1)$-cycles on $(d+4)-1$ vertices. By induction 
hypothesis, they share a $(d-2)$-face $\tau'$. The desired $(d-1)$-face $\tau$ is given by $\tau=(\tau' \cup v)$.

To conclude the proof of the Theorem, consider two $d$-simplices $\sigma,\zeta \in \supp(Z)$.
If they fall in the same $Z_i$, the $(d+1)$-connectivity of $G_d(Z)$ implies that there are 
$(d+1)$ vertex-disjoint paths between the corresponding vertices $v_\sigma, v_\zeta$ in $G_d(Z)$.
Else, $\sigma\in Z_i$ and $\zeta\in Z_j$. If $Z_i$ and $Z_j$ share a common $d$-simplex $\xi$,
then, using the equivalence relation of Def.~\ref{def:bi-con}, we conclude that
$\sigma \sim \xi,~\zeta\sim \xi~\Longrightarrow \sigma \sim \zeta$, and by Cor.~\ref{cor:bi-con},
we again have at least $(d+1)$ vertex-disjoint paths. Finally, if $Z_i$ and $Z_j$ have no common $d$-simplices, by the above claim they still have a common $(d-1)$-simplex. So, there are 2 vertices in $V(G_{d}(Z_i))$, and 2 vertices in $V(G_{d}(Z_j))$, that induce a clique $K_4$ in $G_{d}(Z)$. Hence, there are at least $2$ vertex-disjoint paths between the vertices $v_\sigma, v_\zeta$ in $G_d(Z)$. 
\end{proof}
\begin{remark} We have chosen to present this proof, because it provides more information about the
structure of $G_2(Z^*)$. A simpler alternative proof would first reduce the problem to $n\leq 6$, by 
using the following argument. 

By definition of $d$-cocycles, the restriction of $Z^*$ to any subset $S\subset [n]$
is a $d$-cocycle of $K_n^d$ as well.  
Thus, for any pair of $2$-simplices  $\sigma,\zeta \in Z^*$, instead of considering the paths between the
corresponding vertices in $G_2(Z^*)$, it suffices to consider them in $G_2(Z^*|_S)$, where $S$ is the union of 
the vertex sets of $\sigma$ and $\zeta$. 
\end{remark}
%
%
\section{Extensions and Refinements}

In this section we further develop the results obtained in the previous section.
We shall make a wider use of the homology-related notions of Algebraic Topology, which are luckily 
well suited for the discussion. This will make the presentation slightly more advanced, but the benefits
will be apparent.

\subsection{Facet Graphs of Simple Cycles: Beyond Connectivity}
\label{sec:components}
What follows is a direct continuation of Theorem~\ref{thm:d-cycle-conn}.

Analogously to Klee's question about vertex graphs of convex polytopes~\cite{klee}\footnote
{
Klee studied the following question: what is the maximum possible number of the connected components in the vertex-graph of a convex $d$-polytope,
after the removal of $m$ vertices? The answer: ~it at most $1$ for $m \leq d$, ~$2$ for $m=d+1$, ~and
for a general $m$, at most the maximum possible number of facets in a convex $d$-polytope on $m$ vertices.
}, 
we ask what happens to the facet graph $G=G_d(Z)$ of a simple $d$-cycle $Z$, $d\geq 1$, after removal of a set of vertices 
$V_D \subseteq V[G]$ corresponding to a set of $d$-simplices $D \subseteq \supp(Z)$. 
The following localization theorem provides an answer to this question in terms of the topological structure of $K(D)$. 
\begin{lemma}
\label{lm:components}
Let $Z$ be a simple $d$-cycle and $D \subset \supp(Z)$. Assume that removal of $V_D$ from $G_d (Z)$ creates \;$m>1$\; 
connected components. Then, $K(D)$ contains $m$ $(d-1)$-cycles so that: {\bf (a)} any two of them have disjoint 
$(d-1)$-supports; ~{\bf (b)} any $m-1$ of them are linearly independent modulo the space of $(d-1)$-boundaries 
${\cal B}_{d-1}(K(D))$, while all $m$ of them are dependent. 
\end{lemma}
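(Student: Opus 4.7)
The plan is to lift the decomposition argument from the proof of Theorem~\ref{thm:d-cycle-conn}. Let $V_1,\ldots,V_m$ be the vertex sets of the connected components of $G_d(Z)\setminus V_D$, let $S_i$ be the corresponding collections of $d$-simplices in $\supp(Z)\setminus D$, and split $Z = Z_D + \sum_i Z_i$, where $Z_i$ (resp.\ $Z_D$) collects those terms of $Z$ whose $d$-simplex lies in $S_i$ (resp.\ in $D$). My candidate $(d-1)$-cycles will be $C_i := \partial Z_i$ for $i=1,\ldots,m$; each is a $(d-1)$-cycle automatically, since $\partial^2=0$.

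First I would verify that each $C_i$ is supported on $K(D)^{(d-1)}$ and that the supports of the $C_i$'s are pairwise disjoint, which is exactly condition~(a). Both follow from the component structure of $G_d(Z)\setminus V_D$: a $(d-1)$-face $\tau$ contained in some $d$-simplex of $S_i$ cannot lie in any $d$-simplex of $S_j$ with $j\ne i$ (else those two simplices would be adjacent in $G_d(Z)$), so any $\tau\in\supp(C_i)$ is contained only in $d$-simplices of $S_i\cup D$; then the identity $\partial Z=0$ forces the non-zero contribution to $\tau$ from $S_i$ to be cancelled out by $D$, placing $\tau$ in $K(D)^{(d-1)}$.

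For condition~(b), summing yields $\sum_i C_i = \partial\bigl(\sum_i Z_i\bigr) = -\partial Z_D \in {\cal B}_{d-1}(K(D))$, giving the dependence of all $m$ cycles. For the independence of any $m-1$ of them modulo ${\cal B}_{d-1}(K(D))$, the decisive step, which uses the simplicity of $Z$, proceeds by contradiction. Assume a relation $\sum_{i\in I}a_iC_i=\partial B$ with $I\subsetneq\{1,\ldots,m\}$, some $d$-chain $B$ supported on $D$, and not all $a_i$ zero. Then the chain $W := \sum_{i\in I}a_iZ_i - B$ satisfies $\partial W=0$, so it is a $d$-cycle supported on $\bigl(\bigcup_{i\in I}S_i\bigr)\cup D$, a strict subset of $\supp(Z)$ because some $S_j$ with $j\notin I$ is non-empty. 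Since $Z$ is simple, $W=0$; but then $B=\sum_{i\in I}a_iZ_i$, and since $D$ is disjoint from every $S_i$, comparing coefficients on each $\sigma\in S_i$ (with $c_\sigma\ne 0$) forces $a_i=0$ for every $i\in I$, a contradiction.

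I do not expect a serious conceptual obstacle: the whole argument is a refinement of the proof of Theorem~\ref{thm:d-cycle-conn}, which already produced a strictly smaller simple cycle from a separating set of fewer than $d+1$ vertices. The only points requiring care will be the bookkeeping distinguishing supports on $D$ from those on the $S_i$'s, and observing that the statement is non-vacuous: applying the $(m-1)$-independence above to the singleton $I=\{i\}$ already rules out $C_i\in{\cal B}_{d-1}(K(D))$, so in particular none of the $m$ cycles is zero.
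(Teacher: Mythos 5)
Your proposal is correct and follows essentially the same route as the paper: the same decomposition $Z = Z_D + \sum_i Z_i$ with $C_i=\partial Z_i$, the same disjointness argument for (a), the same dependence relation $\sum_i C_i = -\partial Z_D$, and the same simplicity-based contradiction for the independence of any proper subfamily. Your version is, if anything, slightly more explicit in the bookkeeping (e.g., in showing $\supp(C_i)\subseteq K(D)^{(d-1)}$ and in extracting $a_i=0$ by comparing coefficients after concluding $W=0$), but there is no substantive difference.
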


\begin{proof}
Proceeding as in the proof of Theorem~\ref{thm:d-cycle-conn}, let $V_1,\ldots,V_m \subset V$, be the vertex sets of the 
resulting connected components, and let $S_1,\ldots, S_m$ be the corresponding sets of $d$-simplices in $\supp(Z)$. Given 
that $Z=\sum c_j \sigma_j$, define the $d$-chains  $Z_i = \sum_{\sigma_j \in S_i} c_j \sigma_j$, and 
$Z_D = \sum_{\sigma_j \in D} c_j \sigma_j$. Finally, define 
the $(d-1)$-cycles $C_i = \partial Z_i$ supported on $K(D)$. This will
be the set of the desired cycles.

By definition of $G_d(Z)$, different $K(S_i)$'s have disjoint $(d-1)$-supports, and since $C_i$ is supported on $K(S_i)$,
the same applies to $C_i$'s. This establishes {\bf (a)}. 

To prove {\bf (b)}, consider, e.g., 
the first $m-1$ cycles, and assume by contradiction that for some $d$-chain $Q_d$ on $D$, and for some (not all zero)
coefficients $k_i \in \F$, it holds that ~$k_0 \cdot \partial Q_d + \sum_{i=1}^{m-1} k_i \cdot C_i = 0$.~
Let $Z' = k_0 \cdot Q_d + \sum_{i=1}^{m} k_i \cdot Z_i$ be a $d$-chain on $K(Z)$. 
Using the disjointness of $\supp(Z_i)$'s, and the disjointness of ~$\cup_{i=1}^{m-1} \supp (Z_i)$ and $D$,
it is easily verified that $Z'$ is neither $0$ nor $Z$; the latter since $\supp(Z') \cap \supp(Z_m) = \emptyset$. Moreover, by 
definition of $Z'$, it holds that $\partial  Z'=0$. This contradictions the simplicity of $Z$.

To see that $\{ C_i \}_{i=1}^m $ are linearly dependent over ${\cal B}_{d-1}(K(D))$, recall that 
$C_i = \partial Z_i$, and that $Z_D + \sum_{i=1}^m Z_i = Z$. Therefore,  
$\partial Z_D + \sum_{i=1}^m C_i = \partial Z = 0$.
\end{proof}  
As an immediate corollary to the lemma one gets:
\begin{corollary}
\label{cor:comp}
Let $Z$ be a simple $d$-cycle, and $D \subseteq \supp(Z)$.  The number
of the connected components of $G_d(Z) \setminus D$  is at most ~$1 +
\dim \tilde{H}_{d-1}(K(D))  = 1 + \tilde{\beta}_{d-1}(K(D))$. $\qed$
\end{corollary}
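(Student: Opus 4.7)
The plan is to extract the corollary directly from Lemma~\ref{lm:components}, since essentially all of the work is already done there. First I would dispose of the trivial case $m=1$, where the claimed upper bound $1+\tilde{\beta}_{d-1}(K(D)) \geq 1$ holds automatically.

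For the case $m>1$, I would invoke Lemma~\ref{lm:components} to obtain $m$ cycles $C_1,\ldots,C_m \in {\cal Z}_{d-1}(K(D))$ with the property that any $m-1$ of them are linearly independent modulo ${\cal B}_{d-1}(K(D))$. The whole point is that this condition translates verbatim into linear independence of the homology classes $[C_1],\ldots,[C_{m-1}]$ in the quotient space $\tilde H_{d-1}(K(D)) = {\cal Z}_{d-1}(K(D))\,/\,{\cal B}_{d-1}(K(D))$. Consequently $\dim \tilde H_{d-1}(K(D)) \geq m-1$, which rearranges to $m \leq 1 + \tilde{\beta}_{d-1}(K(D))$, exactly as required.

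Since the heavy lifting lies in Lemma~\ref{lm:components}, I do not anticipate any real obstacle. The only subtlety worth flagging is that the lemma's phrasing ``linearly independent modulo ${\cal B}_{d-1}(K(D))$'' is precisely the definition of linear independence in the quotient $\tilde H_{d-1}(K(D))$, so the dimension bound is immediate and no additional algebraic manipulation is needed. Note that the dependency of all $m$ cycles together, also provided by part~(b) of the lemma, is not used in establishing this upper bound; it would become relevant only for tightness considerations.
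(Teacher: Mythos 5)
Your proof is correct and is exactly the argument the paper intends: the corollary is stated as an immediate consequence of Lemma~\ref{lm:components}(b), since the linear independence of any $m-1$ of the cycles $C_i$ modulo ${\cal B}_{d-1}(K(D))$ gives $\dim \tilde{H}_{d-1}(K(D)) \geq m-1$. Your handling of the trivial case $m=1$ and your remark that the full dependency of all $m$ cycles is not needed for the upper bound are both accurate.
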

By Lemma~\ref{cl:d-cycle-conn}, when $|D|\leq d$, it holds that $\tilde{H}_{d-1}(K(D))=0$, and hence $\tilde{\beta}_{d-1}(K(D))=0$,
implying that there is a unique connected component.
A natural question is how large can $\tilde{\beta}_{d-1}(D)$ be as a function of $|D|$ alone, in particular when $|D|$ is large.
To prepare the necessary background for the discussion, we cite the following result.
\begin{theorem}
\label{th:LNPR}\cite{lnpr}\footnote{
Strictly speaking, most of the relevant results in that paper are formulated for \;$\rank_r(T)$,  rather than 
for \;$\dim {\cal Z}_r(K(T))$. However, the two parameters are closely related, as \;$\rank_r(T) + \dim {\cal Z}_r(K(T)) = |T^r|$.
}
Let $T$ be a set of $r$-simplices, $|T|=t$. Then, the dimension of ${\cal Z}_r(K(T))$, the space of $r$-cycles
over $K(T)$, is maximized when the set family ${\cal T} = \{\supp(\tau)\}_{\tau\in T}$\, is compressed, i.e., when 
${\cal T}$ contains the first $t$ elements in the reverse lexicographic order of the $(r+1)$-size subsets of $[n]$.
The corresponding numerical estimation is that for $t={x \choose {r+1}}$, $x\in\R$, the dimension of ${\cal Z}_r(K(T))$ 
never exceeds ${{x-1} \choose {r+1}}$. I.e.,
\[
\dim {\cal Z}_r(T) ~\leq~ t - \Omega \left(t^{{1- {1\over{r+1}}}}\right)\;.
\]
Moreover, for $t = {n\choose {r+1}}$, $n\in \N$, the optimal 
$\dim {\cal Z}_r(K(T)) = {{n-1} \choose {r+1}}$ is achieved on the $r$-skeleton of $K_n^r$. $\qed$
\end{theorem}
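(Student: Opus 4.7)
The plan is to attack this via the compression (``shifting'') technique standard in extremal set theory. For every pair $i<j \in [n]$, define the shift operator $C_{ij}$ acting on a family $T$ of $(r+1)$-subsets of $[n]$ by the rule: for every $\sigma \in T$ with $j \in \sigma$, $i \notin \sigma$, and $(\sigma \setminus \{j\}) \cup \{i\} \notin T$, replace $\sigma$ by $(\sigma \setminus \{j\}) \cup \{i\}$. A family stable under all $C_{ij}$ with $i<j$ is called \emph{compressed}, and among families in $\binom{[n]}{r+1}$ of any prescribed cardinality $t$, the compressed ones are exactly the initial segments in the reverse lexicographic order. Iterating unfinished shifts strictly decreases $\sum_{\sigma \in T}\sum_{s \in \sigma} s$, so the process terminates in finitely many steps.

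The crux of the argument is a monotonicity lemma: for every $i<j$, $\dim \mathcal{Z}_r(K(T)) \leq \dim \mathcal{Z}_r(K(C_{ij}(T)))$. My approach would be to construct an explicit linear map $\Phi$ on the space of $r$-chains such that (a) $\Phi$ maps chains supported on $T$ to chains supported on $C_{ij}(T)$, (b) $\Phi$ commutes with $\partial$ in the appropriate sense so that cycles go to cycles, and (c) $\Phi$ is injective when restricted to $\mathcal{Z}_r(K(T))$. For a simplex $\sigma \in T$ that actually moves under $C_{ij}$ set $\Phi(\sigma) = \pm (\sigma \setminus \{j\}) \cup \{i\}$ with the sign chosen so that the boundaries shift consistently; for $\sigma$ that is fixed (either $j \notin \sigma$, or $i \in \sigma$, or its shift is blocked) set $\Phi(\sigma)=\sigma$. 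Proving injectivity on cycles --- as opposed to merely on chains --- is the delicate step: one must use $\partial Z = 0$ to resolve ambiguities in the preimage via a careful combinatorial case analysis distinguishing ``blocked'' from ``free'' shifts.

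Once monotonicity is established, the bound for compressed families follows by induction on $n$. Split $T = T' \sqcup T''$ where $T'' = \{\sigma \in T : n \in \sigma\}$, and $L = \{\sigma \setminus \{n\} : \sigma \in T''\}$ is a compressed family of $r$-subsets on $[n-1]$; both $T'$ and $L$ are compressed on smaller ground sets. A Mayer--Vietoris-type short exact sequence relates $\mathcal{Z}_r(K(T))$ to $\mathcal{Z}_r(K(T'))$ and $\mathcal{Z}_{r-1}(K(L))$, and applying the inductive hypothesis to both pieces together with elementary arithmetic of binomial coefficients yields $\dim \mathcal{Z}_r(K(T)) \leq \binom{x-1}{r+1}$ for $t = \binom{x}{r+1}$. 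In the extremal case $t = \binom{n}{r+1}$ this is realized by the full $r$-skeleton of $K_n^r$, whose cycle dimension is the classical $\binom{n-1}{r+1}$. The asymptotic form $\dim \mathcal{Z}_r(T) \leq t - \Omega(t^{1-1/(r+1)})$ then follows from Pascal's identity: $\binom{x}{r+1} - \binom{x-1}{r+1} = \binom{x-1}{r} = \Theta(x^r) = \Theta(t^{r/(r+1)})$.

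The hard part, as flagged above, is the monotonicity lemma. For $r=1$ (graph cycles) it is almost self-evident, since shifting an edge toward lower indices tends to create rather than destroy cycles. In higher dimensions the signs of the boundary operator interact subtly with the shift, and ensuring that $\Phi$ is truly injective on cycles demands the kind of bookkeeping that seems to be the real combinatorial content of the theorem. A possible backup route is to avoid shifts altogether and derive the bound from a Kruskal--Katona estimate on the shadow of $T$ together with the rank--nullity identity $\dim \mathcal{Z}_r(K(T)) = |T| - \rank(\partial_r|_T)$; but then one has to lower-bound $\rank(\partial_r)$ in terms of the shadow, which ultimately devolves back onto a compression argument anyway.
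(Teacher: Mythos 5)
First, a point of order: this statement is quoted from~\cite{lnpr} and is not proved in the present paper (note the $\qed$ terminating the statement itself), so there is no internal proof to compare against; you are in effect proposing a proof of an external theorem whose proof is the main technical content of that other paper.

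On the merits, there is a genuine gap, and it sits exactly where you flagged it: the monotonicity lemma $\dim {\cal Z}_r(K(T)) \leq \dim {\cal Z}_r(K(C_{ij}(T)))$ is asserted, not proved, and the construction you sketch for $\Phi$ does not work as described. The map sending each moved simplex $\sigma$ to $\pm\bigl((\sigma\setminus\{j\})\cup\{i\}\bigr)$ and fixing the rest is a bijection on simplices, hence injective on chains, but it is not a chain map: if $\sigma$ is moved while an adjacent $\tau$ is \emph{blocked} (because $(\tau\setminus\{j\})\cup\{i\}$ already lies in $T$), then before the shift $\sigma$ and $\tau$ may share a facet containing $j$, while after the shift $\sigma$'s facets containing $j$ have been replaced by their shifted versions and $\tau$'s have not; a cancellation occurring in $\partial Z$ at such a facet is destroyed, so $\Phi(Z)$ need not be a cycle at all. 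Thus the problem is not merely ``injectivity on cycles'' but well-definedness of $\Phi$ as a map ${\cal Z}_r(K(T)) \to {\cal Z}_r(K(C_{ij}(T)))$. This is precisely the classical obstruction that makes \emph{combinatorial} shifting incompatible with homological invariants in general (which is why Bj\"orner--Kalai control Betti numbers via \emph{algebraic} shifting instead); so the lemma genuinely needs a proof exploiting the special situation here (top-dimensional cycles of a pure family of $r$-faces), and that proof is the entire content of the theorem. Your fallback route does not close the gap either: Kruskal--Katona lower-bounds the size of the shadow, but $\rank(\partial_r|_T)$ can be strictly smaller than the shadow size, so a shadow bound plus rank--nullity does not by itself yield the required lower bound on the rank. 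The remaining ingredients of your plan are sound: the reverse-lex decomposition of a compressed $T$ into the part avoiding the top vertex and the star of the top vertex, the resulting inequality $\dim {\cal Z}_r(K(T)) \leq \dim {\cal Z}_r(K(T')) + \dim {\cal Z}_{r-1}(K(L))$, and the binomial arithmetic giving ${x-1 \choose r+1}$ and the $\Omega\bigl(t^{1-1/(r+1)}\bigr)$ deficiency are all correct, but they only compute the value attained by the compressed family; without the monotonicity (or some substitute lower bound on the rank of an arbitrary family), the extremality claim is unproved.
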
  
This leads to the following crude estimation of $\beta_{d-1}(D)$.  Since the number of $(d-1)$-faces of $K(D)$ can be upper-bounded by $(d+1)\cdot |D|$, using Theorem~\ref{th:LNPR} one gets:
\begin{equation}\label{eq:stam}
\tilde{\beta}_{d-1}(K(D)) ~=~ \dim {\cal Z}_{d-1} (K(D)) - \dim {\cal B}_{d-1} (K(D))  ~\leq~ \dim {\cal Z}_{d-1} (K(D))
~\leq~ (d+1)\cdot |D| - \Omega(|D|^{{1- {1\over{d}}}})\;.
\end{equation}
A more accurate answer to our question was provided by Roy Meshulam:
\begin{theorem}
\label{th:roy}~\cite{roy}~
For $D$ as above, ~$\tilde{\beta}_{d-1}(K(D)) \leq d\cdot |D| -  \Omega (|D|^{{1- {1\over{d}}}})$. 
\\
On the other hand, for any sufficiently large integer $s$ of the form $s={1\over {d+1}}{n \choose {d}}$,  
there exists $D$ of size $s$ with $\tilde{\beta}_{d-1}(D) = {{n-1}\choose d} - {1\over {d+1}}{n \choose {d}}$,
which is best possible for such $s$. 
Thus, the upper bound is asymptotically tight up to the second-order terms.
\end{theorem}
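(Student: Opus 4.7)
The plan is to prove the upper bound and the matching construction separately.

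For the upper bound, the starting point is the rank--nullity identity, applied to $\partial_{d-1}$ and $\partial_d$ acting on the chains of $K(D)$:
\[
\tilde{\beta}_{d-1}(K(D)) \;=\; f_{d-1}(K(D)) - |D| - \dim {\cal B}_{d-2}(K(D)) + \dim {\cal Z}_d(K(D)),
\]
so, writing $g := (d+1)|D| - f_{d-1}(K(D)) \geq 0$ for the total ``excess coverage'' $\sum_\tau(\deg_D(\tau)-1)$ of the $(d-1)$-faces,
\[
d|D| - \tilde{\beta}_{d-1}(K(D)) \;=\; g \;+\; \dim {\cal B}_{d-2}(K(D)) \;-\; \dim {\cal Z}_d(K(D)).
\]
I would then establish two estimates. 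For the first, any $d$-cycle supported on $D$ must live on the subfamily $D_1 \subseteq D$ of those simplices whose facets are \emph{all} shared with some other member of $D$ -- else a unique facet would force the corresponding coefficient in the $d$-cycle to vanish. A double count of incidence pairs $(\sigma,\tau)$ with $\sigma \in D_1$ and $\tau$ a facet of $\sigma$, combined with the elementary inequality $\deg(\tau) \leq 2(\deg(\tau)-1)$ valid whenever $\deg(\tau)\geq 2$, yields $(d+1)|D_1| \leq 2g$, hence $\dim {\cal Z}_d(K(D)) \leq |D_1| \leq 2g/(d+1)$ and consequently $g - \dim {\cal Z}_d(K(D)) \geq \tfrac{d-1}{d+1}\,g$. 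For the second, applying Theorem~\ref{th:LNPR} with $r=d-1$ to the $(d-1)$-skeleton of $K(D)$ (which has $f_{d-1}(K(D))$ many $(d-1)$-simplices) gives $\dim {\cal Z}_{d-1}(K(D)) \leq f_{d-1} - \Omega(f_{d-1}^{1-1/d})$, so $\dim {\cal B}_{d-2}(K(D)) = f_{d-1} - \dim {\cal Z}_{d-1}(K(D)) \geq \Omega(f_{d-1}^{1-1/d})$. Combining,
\[
d|D| - \tilde{\beta}_{d-1}(K(D)) \;\geq\; \tfrac{d-1}{d+1}\bigl((d+1)|D| - f_{d-1}\bigr) \;+\; \Omega\!\left(f_{d-1}^{1-1/d}\right),
\]
and viewing the right-hand side as a function of $f_{d-1} \in [0,(d+1)|D|]$, it is decreasing once $f_{d-1}$ exceeds a constant, so its minimum is attained at $f_{d-1} = (d+1)|D|$ where it equals $\Omega(|D|^{1-1/d})$, establishing the upper bound.

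For the matching construction, I would let $D$ be a partial Steiner system $S(d,d+1,n)$: a family of $(d+1)$-subsets of $[n]$ in which every $d$-subset of $[n]$ lies in exactly one block. Then $|D| = \tfrac{1}{d+1}\binom{n}{d} = s$; the $(d-1)$-skeleton of $K(D)$ equals $K_n^{d-1}$, so $\dim {\cal Z}_{d-1}(K(D)) = \binom{n-1}{d}$; and since each $(d-1)$-face lies in a \emph{unique} $\sigma \in D$, the boundaries $\{\partial\sigma\}_{\sigma \in D}$ are linearly independent, giving $\dim {\cal Z}_d(K(D)) = 0$ and $\dim {\cal B}_{d-1}(K(D)) = |D|$. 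A direct computation then yields $\tilde{\beta}_{d-1}(K(D)) = \binom{n-1}{d} - \tfrac{1}{d+1}\binom{n}{d}$, matching the upper bound. (Noting that $\binom{n-1}{d-1} = \Theta(|D|^{1-1/d})$ confirms that leading terms and second-order corrections both agree.)

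The upper bound is elementary once the rank--nullity identity and Theorem~\ref{th:LNPR} are in place. The real obstacle is the lower bound: guaranteeing a $D$ of exactly size $s = \tfrac{1}{d+1}\binom{n}{d}$ that attains the extremal value requires a perfect design $S(d,d+1,n)$, whose existence for every sufficiently large admissible $n$ is Keevash's deep theorem on the existence of designs (with the special case $d=2$ going back to Wilson).
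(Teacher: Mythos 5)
Your upper bound is correct, and it handles the one delicate point by a different route than the paper. The paper first reduces to the case where $D$ is acyclic (a $d$-simplex whose boundary is spanned by the others can be deleted without changing either ${\cal Z}_{d-1}(K(D))$ or ${\cal B}_{d-1}(K(D))$, and the count is restored with isolated $d$-simplices), after which $\tilde{\beta}_{d-1}(K(D))=\dim{\cal Z}_{d-1}(K(D))-|D|$ and the bound follows directly from~(\ref{eq:stam}). You instead keep $D$ arbitrary and offset $\dim{\cal Z}_d(K(D))$ against the facet-degree excess $g$ via the $D_1$ double count; I checked the inequality $(d+1)|D_1|\le 2g$ and the final one-variable optimization in $f_{d-1}$ (the function is unimodal, and both endpoints of $[0,(d+1)|D|]$ give $\Omega(|D|^{1-1/d})$ for $d\ge 2$), and they are sound. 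What your version buys is that it avoids the replacement argument entirely; what it costs is the extra bookkeeping and, more importantly, the loss of exact constants, which matters below. The construction for the lower bound is identical to the paper's: a Steiner system $S(d,d+1,n)$ supplied by Keevash, full $(d-1)$-skeleton, and acyclicity from unique coverage; your computation of $\tilde{\beta}_{d-1}$ and the verification that the deficit $d\binom{n-1}{d-1}$ is $\Theta(|D|^{1-1/d})$ are both right.

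The one genuine omission is the clause ``which is best possible for such $s$.'' You verify the value attained by the design and that it matches the upper bound asymptotically, but the theorem also asserts that $\binom{n-1}{d}-\frac{1}{d+1}\binom{n}{d}$ is the exact maximum of $\tilde{\beta}_{d-1}(K(D))$ over all $D$ of size $s$, and an $\Omega(\cdot)$ bound cannot deliver that. The paper obtains it precisely from the acyclic reduction you bypassed: once the optimum is attained on an acyclic $D$, one only needs $\dim{\cal Z}_{d-1}(K(D))\le\binom{n-1}{d}$, which follows from the exact extremal part of Theorem~\ref{th:LNPR} (the compressed family, i.e.\ the full skeleton $K_n^{d-1}$, maximizes $\dim{\cal Z}_{d-1}$ among families of at most $\binom{n}{d}$ many $(d-1)$-simplices, together with monotonicity of the extremal value in the family size). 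Within your framework the analogous statement would be $\dim{\cal Z}_{d-1}(K(D))+\dim{\cal Z}_d(K(D))\le\binom{n-1}{d}$, which is not immediate; the cleanest fix is simply to reinstate the acyclic reduction for this clause.
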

\begin{proof} 
Remember that $\tilde{\beta}_{d-1}(K(D)) \;=\; \dim \tilde{H}_{d-1}(K(D)) \;=\; \dim {\cal Z}_{d-1} (K(D)) - \dim {\cal B}_{d-1} (K(D))$. 

For the upper bound, one may w.l.o.g., assume that $D$ is acyclic, i.e., $\dim {\cal B}_{d-1} (K(D)) = |D|$. 
Otherwise, if some 
$d$-simplex in $D$ is spanned by the others, removing it from $D$ effects neither the space of $(r-1)$-cycles,
nor the space of $(r-1)$-boundaries, but reduces the size of $D$. Adding some isolated $d$-simplices
to compensate the reduction in the size does not effect, again, the $(r-1)$-homology group. Thus, one gets
an acyclic set $D'$ of $d$-simplices with $|D'|=|D|$, and $\tilde{\beta}_{d-1}(K(D)) = \tilde{\beta}_{d-1}(K(D'))$. 

For an acyclic $D$, arguing as in~(\ref{eq:stam}), one gets
\[
\tilde{\beta}_{d-1}(K(D)) ~~=~~ \dim {\cal Z}_{d-1} (K(D)) - \dim {\cal B}_{d-1} (K(D)) 
~~\leq~~ \left((d+1)\cdot |D| - \Omega (|D|^{{1- {1\over{d}}}})\right) - |D| ~~=~~ \] \[
=~~ d\cdot |D| - \Omega (|D|^{{1- {1\over{d}}}})\;.
\]
For the lower bound, one may use the recent breakthrough result of Keevash~\cite{keevash}, implying, in particular,
that for any $d$, and for sufficiently large $n$ such that $(d+1)$ divides ${n \choose d}$, there exists a set $D^*$ of
$d$-simplices that covers every $(d-1)$-simplex in $K_n^{d-1}$ exactly once. Clearly, 
$|D^*| = s = {1\over {d+1}}{n \choose {d}}$. The goal is to show that for this $s$,  $\tilde{\beta}_{d-1}(K(D^*))$ is the
maximum possible.

The set $D^*$ is acyclic, and by the argument above, so is the optimal set $D^{OPT}$ of the same size. 
Thus, it suffices to argue that $\dim {\cal Z}_{d-1} (K(D^*))$ is the biggest possible.
The $(d-1)$-skeleton of $D^*$ is of size ${n \choose d} = (d+1)\cdot s$, which is the biggest possible for
any $D$ of size $s$. Finally, the $(d-1)$-skeleton of $D^*$ is $K_n^{d-1}$, which by Theorem~\ref{th:LNPR},
has the biggest possible dimension of ${\cal Z}_{d-1} (K(T))$, namely ${{n-1}\choose d}$, among 
all sets $T$ of $(d-1)$-simplices with $|T| = {n\choose d}$. Since $\,\max_{T, ~|T| = t} \,\dim {\cal Z}_{d-1} (K(T))\,$
is monotone increasing in $t$, the statement follows.
\end{proof}
Corollary~\ref{cor:comp}, i.e., part  ${\bf (b)}$ of Lemma~\ref{lm:components}, together with 
Theorem~\ref{th:roy} imply that the number of connected components obtained by removing at most $s$ 
vertices from the facet graph of a $d$-cycle is at most $ds$. Somewhat surprisingly, part ${\bf (a)}$ of 
Lemma~\ref{lm:components} yields a stronger upper bound:
\begin{theorem}
\label{th:tough}
Let $G_d(Z)$ be the facet graph of a simple $d$-cycle. Then, removing from $G_d(Z)$ any $s$ vertices, may create
at most $s$ connected components.
\end{theorem}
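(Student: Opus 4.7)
The plan is to exploit part \textbf{(a)} of Lemma~\ref{lm:components} rather than part \textbf{(b)}: the fact that the $(d-1)$-cycles $C_1,\ldots,C_m$ produced there have \emph{pairwise disjoint} $(d-1)$-supports turns out to be a much sharper constraint than their near-independence modulo boundaries.

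Assume $|D|=s$ and that removing $V_D$ from $G_d(Z)$ creates $m>1$ connected components. Apply Lemma~\ref{lm:components} to obtain $(d-1)$-cycles $C_1,\ldots,C_m$ supported on $K(D)$, with $\supp(C_i)\cap\supp(C_j)=\emptyset$ for $i\neq j$. Each $C_i$ is a nonzero $(d-1)$-cycle, and as noted early in the preliminaries, the smallest possible $(d-1)$-cycle is the boundary of a $d$-simplex, which has exactly $d+1$ facets of dimension $d-1$; hence $|\supp(C_i)|\geq d+1$ for every $i$. By disjointness,
\[
\sum_{i=1}^{m} |\supp(C_i)| \;\geq\; m(d+1).
\]

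On the other hand, the $(d-1)$-faces of $K(D)$ are exactly the facets of the $d$-simplices in $D$, of which there are at most $(d+1)|D|=(d+1)s$. Since the supports $\supp(C_i)$ all live inside this set of $(d-1)$-faces, the two inequalities combine to give $m(d+1)\leq (d+1)s$, that is, $m\leq s$, as desired.

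I do not expect a genuine obstacle: the whole argument is a single counting step riding on Lemma~\ref{lm:components}(a). The only thing one must be careful about is ruling out $|\supp(C_i)|<d+1$, but this is automatic from the standard fact (recalled in Section~\ref{sec:prelim}) that $d+2$ (respectively $d+1$) is the minimum size of a nonzero $d$-cycle (respectively $(d-1)$-cycle), since the boundary of any $d$-simplex realizes the minimum. The result is essentially tight, as one $d$-simplex of $D$ can account for at most one new ``piece'' separated from the rest.
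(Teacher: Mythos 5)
Your proof is correct and is essentially identical to the paper's: both use the disjointness of the $m$ cycles from Lemma~\ref{lm:components}(a), the lower bound $|\supp(C_i)|\geq d+1$ on the size of a nonzero $(d-1)$-cycle, and the upper bound $(d+1)s$ on the number of $(d-1)$-faces of $K(D)$ to conclude $m\leq s$. Your extra remark that nonvanishing of each $C_i$ must be verified is a fair point (it follows from part (b) of the lemma, since each single $C_i$ is independent modulo boundaries, hence nonzero), but this does not change the argument.
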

\begin{proof} Recall that a $(d-1)$-cycle is of size at least $d+1$. Since the $m$\, different $(d-1)$-cycles in Lemma~\ref{lm:components} are disjoint, they contain, altogether, at least $(d+1)m$\, different $(d-1)$-simplices.
On the other hand, let $D$ be the set of $d$-simplices in $\supp(Z)$ corresponding to the removed vertices.
Then, the number of $(d-1)$ faces of $K(D)$ is at most $(d+1)\cdot |D| = (d+1)s$. Thus, $(d+1)s \geq (d+1)m$,
and the conclusion follows. 
\end{proof}
The inequality $s\geq m$ is tight (for some $s$'s), as shown by the following construction achieving $s=m$.
Take a triangulation $T$ of a $d$-pseudomanifold over $\F$, (i.e., every $(d-1)$-face of $K(T)$ 
is contained in exactly two
$d$-simplices of $T$, and $T$ supports a unique nonempty $d$-cycle),
with the property that its facet graph $G_d(T)$ is bipartite. 

An example of such a triangulation of the sphere is the
$d$-cross-polytope, also known as the $d$-cocube. Its facet graph is
the graph of the cube, namely bipartite with $2^{d+1}$ vertices, and
two color classes each of size $2^d$. Another example for $d=2$ is provided by taking a torus obtained by appropriately gluing the opposite sides
of a planar $k\times k$ square, where $k\geq 4$ is even, and subdividing each $1\times 1$ square cell in it
into two triangles by drawing the North-East diagonal. 

Obviously, for such $T$, taking $D$ as all $d$-simplices in one color
class of $G_d(T)$ results is decomposing the resulting $G_d(T)
\setminus D$ into singletons.

The graph-theoretic property stated in the above theorem is called {\em toughness}. It has implications. E.g., using
Tutte's criterion for existence of a perfect matching in a graph, one
concludes via toughness that if a $d$-cycle $Z$ is of even size,
then $G(Z)$ has a perfect matching. For a survey of toughness see~\cite{tough}.    
%
%
\subsection{Cycles in Cell Complexes}
So far, we have discussed structures in simplicial complexes. In this section, we would like to discuss a class of 
axiomatically defined {\em cell complexes} that includes simplicial complexes and convex polytopes (more precisely, 
the combinatorial abstraction preserving the structure of their faces). The methods and  results
obtained  for simplicial complexes will be re-examined and
generalized. We are mostly interested in the generalizations of
Lemma~\ref{cl:d-cycle-conn} and Theorem~\ref{thm:d-cycle-conn}, in
particular we will generalize Balinski Theorem for such complexes.

Replacing simplices by {\em cells} with a specified combinatorial structure, and equipped with a boundary operator $\partial$,
gives rise to cell complexes and their homology groups. The following axioms describe the structure of the cells.
Notably, the standard assumption that the boundary of a cell is a
pseudomanifold will be replaced here by a significantly weaker assumption 
that it is a simple cycle. 
\\ \\
Formally, {\em  abstract cell complex} is   a graded poset  (partially
ordered set) $\P$ , whose elements of $\P$ will be called {\em open
  cells}. The order 
represents the cell-subcell relation.
 A (closed) cell $K(\C^o) \subseteq \P$ 
corresponding to an open cell $\C^o \in \P$, is defined as the set of
all elements that are dominated by $\C^o$ in $\P$, including $\C^o$. 

Since $\P$ is a graded poset, the {\em rank} or {\em dimension} of its elements is well defined. Define also 
$\Delta \C^o \subset \P$, the set of {\em facets} of $\C^o$ in $\P$, as the set of subcells $K(\C^o)$
of co-dimension 1.  The elements of dimension $0$ in $\P$ are
associated with the singletons in $[n]$. Moreover, $\P$ is formally extended to contain
a unique minimal element of dimension $-1$, associated with the empty
cell $\emptyset$. 

For a closed cell $C=K(C^o)$, its $0$-dim subcells are
denoted by $V(C)$, and are referred to as  its {\em vertex set}.  

A $d$-chain is a formal sum of
weighted open $d$-cells with coefficients being non-zero elements in $\F$. 

\vspace{0.1cm}
The axioms satisfied by $\P$ are as follows:
\vspace{0.1cm}
{\em 

%

{\bf A1:~} The restriction of $\cal P$ to any $K({\cal C}^o)$ is a
lattice. (I.e., every two elements in it have a unique minimal upper bound, and a
unique maximal lower bound).

{\bf A2:~} For every dimension $d \geq 0$, there is a boundary operator $\partial_d$ mapping every open $d$-cell $\C^o$ to
a $(d-1)$-chain supported on $\Delta \C^o$. In particular, $\partial_0 \{i\} = \emptyset$.
It is required that $\partial_{d+1}\partial_{d} = 0$. The operator
$\partial_d$ is linearly extended to a mapping from $d$-chains of
cells  to $(d-1)$-chains. 
A $d$-{\em cycle} is a $d$-chain $Z$ for which $\partial_d (Z)
= 0$.  $Z$ is a {\em simple} $d$-cycle if its support does not properly
contain the support of any other cycle.   

{\bf A3:~} For every open $d$-cell $\C^o$, its boundary $\partial \C^o$ is a {\em simple} cycle.
Equivalently, up to a multiplicative constant, $\partial \C^o$ is the only $(d-1)$-cycle in the closed cell $\C$.    
}

As before, $d$-chains in $\Im(\partial_{d+1})$ are called
$d$-boundaries, and by {\bf A2} they are cycles. 

\begin{definition}\label{def:199}
Call a set $T\subseteq {\cal P}$ of (open or closed) cells {\em compatible} if there exists a closed
cell in $\cal P$ containing them all. For such $T$, define the cell complex $K(T) \subseteq {\cal P}$ as the 
union of closures of cells in $T$. Observe that by axiom {\bf A1}, for any two cells in $K(T)$, there exists a unique maximal element contained in both.
\end{definition}
The whole purpose and requirement of Definition~\ref{def:199} is to
ensure the property stated in its last sentence.

Finally,  (reduced) homology groups are defined by the boundary operator $\partial$ just as 
in simplicial complexes. The facet graph $G_d(K)$ of $d$-dimensional complex $K$  
has a vertex  for each $d$-cell in $K$, and has an edge between two vertices if the corresponding 
$d$-cells have a common $(d-1)$-cell. 

\begin{claim}
\label{cl:cycle}
For $d \geq 1$, any nontrivial $d$-cycle $Z_d$ with compatible support contains at least $d+2$ $d$-cells. 
\end{claim}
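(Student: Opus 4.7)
The plan is to induct on $d \geq 1$. Fix $Z_d$ with compatible support in a common closed cell $C$, and pick any $d$-cell $\sigma \in \supp(Z_d)$. By Axiom A3, $\partial \sigma$ is a nontrivial simple $(d-1)$-cycle whose support lies inside the closed cell $\bar\sigma$ and is therefore compatible. For the base case $d=1$, the $0$-cycle $\partial\sigma$ has support of size at least $2$, because Axiom A2 gives $\partial_0\{i\}=\emptyset$ so any nonzero $0$-cycle must have a canceling pair of vertices. For $d \geq 2$, the inductive hypothesis applied to $\partial\sigma$ yields $|\supp(\partial\sigma)| \geq d+1$. Either way, $\sigma$ has at least $d+1$ facets.

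The next step is to exploit $\partial Z_d = 0$: every $(d-1)$-cell $\tau$ in $\supp(\partial\sigma)$ must appear in $\partial\sigma'$ for some other $\sigma' \in \supp(Z_d)$ in order for its coefficient to cancel. The crucial step is to show that a single $\sigma'$ cannot cancel two distinct facets $\tau_1 \neq \tau_2$ of $\sigma$. I will argue as follows: by compatibility, $\sigma, \sigma' \leq C^o$, so both lie in the lattice $K(C)$ guaranteed by Axiom A1, and hence have a meet $\mu = \sigma \wedge \sigma'$ in $K(C)$. If $\tau_1$ and $\tau_2$ were both common facets, then $\mu \geq \tau_1$ and $\mu \geq \tau_2$; since $\dim \tau_i = d-1$, we get $\dim \mu \geq d-1$, and if equality held then $\tau_1 = \mu = \tau_2$, contradicting $\tau_1 \neq \tau_2$. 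Hence $\dim \mu \geq d$, so $\mu = \sigma$ and, symmetrically, $\mu = \sigma'$, giving $\sigma = \sigma'$, a contradiction.

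Consequently, the $\geq d+1$ facets of $\sigma$ require $\geq d+1$ pairwise distinct cells of $\supp(Z_d) \setminus \{\sigma\}$ to cancel them, yielding $|\supp(Z_d)| \geq 1 + (d+1) = d+2$.

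The main obstacle I anticipate is being careful about what Axiom A3 buys: the argument rests on reading ``$\partial\sigma$ is a simple $(d-1)$-cycle'' as asserting genuine nontriviality, so that the inductive lower bound on its support can be invoked. The meet argument itself is clean once it is explicitly observed that the compatibility hypothesis on $\supp(Z_d)$ places $\sigma$ and $\sigma'$ inside a common lattice $K(C)$, so that $\sigma \wedge \sigma'$ exists and inherits the expected dimension inequalities.
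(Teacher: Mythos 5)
Your proof is correct and follows essentially the same route as the paper: induct so that Axiom A3 plus the inductive hypothesis gives a chosen $d$-cell at least $d+1$ facets, use $\partial Z_d=0$ to force each facet to be cancelled by another cell of the support, and use the lattice axiom A1 (via compatibility) to show no single cell can cancel two distinct facets. Your meet-and-dimension argument is just a more explicit spelling-out of the paper's "more than one common maximal subcell contradicts A1" step, and your $d=1$ base case is the paper's $d=0$ base case folded into the induction.
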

\begin{proof}
The proof is by induction on $d$. For $d=0$, one needs at least two singletons for the sum of coefficients of $\emptyset$ 
to cancel out. Assume correctness for $(d-1)$. Let ${\cal C}^o$ be any
open cell in the support of $Z_d$. 
Since $\partial {\cal C}^o$ is a $(d-1)$-cycle, by inductive assumption $K({\cal C}^o)$ has $r \geq d+1$ facets 
$\Upsilon_1, \Upsilon_2, \ldots \Upsilon_r$ of dimension $d-1$. Since $\partial Z_d = 0$, for every $\Upsilon_i$ 
there exists at least one additional $d$-cell ${\cal C}^o_i \in Z$ besides ${\cal C}^o$ that contains $\Upsilon_i$. 
Observe that ${\cal C}^o_i$ may not contain any other $\Upsilon_j$, since otherwise ${\cal C}^o$ and ${\cal C}^o_i$ would have more than one common maximal subcell, contrary to {\bf A1}. 
Thus, all $\C^o_i$ are distinct, and the support $Z_d$ contains at least $d+2$ $d$-cells: ${\cal C}^o$ and $\{{\cal C}^o_i\}_{i=1}^r$.
\end{proof}
\vspace{-0.5cm}
\begin{theorem}
\label{th:general}
The facet graph $G_d(Z_d)$ of a simple $d$-cycle $Z_d$ with compatible 
support, $d \geq 1$, is $(d+1)$-connected ~(in the robust sense of
Remark~\ref{rm:mixed}).\footnote{
The assumption about a compatible support is essential. Consider,
e.g., the following set of open $2$-cells (originating from faces of convex $2$-polytopes): $C_1=(1,2,3,4,5)$ with
boundary $\partial_2 C_1= (1,2) + (2,3) + (3,4) + (4,5) - (1,5)$,  
$C_2 = (1,2,3,)$ with $\partial_2 (1,2,3) = (1,2) + (2,3) -
(1,3)$, $C_3 = (1,3,4)$ with $\partial_2 C_3 = (1,3) +(3,4) -(1,4)$, and
$C_4 = (1,4,5)$ with $\partial C_4 = (1,4) + (4,5) -(1,5)$.  Clearly, this
set of cells is not compatible. Respectively, the facet graph $G_2(Z_2)$
of the $2$-cycle $Z_2= C_1 - C_2 - C_3 - C_4$ is not $3$-connected.
} 
\end{theorem}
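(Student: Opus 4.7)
The plan is to follow the argument of Theorem~\ref{thm:d-cycle-conn} almost verbatim, once the cell-complex analogue of Lemma~\ref{cl:d-cycle-conn} is in hand. Concretely, I would first establish:

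\textbf{(L)} \emph{For any compatible collection $D$ of at most $d$ cells of dimension at most $d$, every $(d-1)$-cycle supported on $K(D)$ is a $(d-1)$-boundary of $K(D)$.}

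Granting (L), the argument proceeds exactly as in the simplicial case. Assume for contradiction that $G_d(Z_d)$ is disconnected (in the robust sense of Remark~\ref{rm:mixed}) by removing a set $D$ consisting of $r$ $d$-cells and $q$ $(d-1)$-cells with $r+q\le d$. Let $V_1,\dots,V_m$ with $m\ge 2$ be the resulting components, and let $Z_i$ be the restriction of $Z_d$ to the $d$-cells indexed by $V_i$. Any $(d-1)$-cell of $\supp(Z_d)\setminus K(D)$ is incident to $d$-cells of at most one $V_i$ (else the corresponding edge would still link two components), hence $C_i:=\partial Z_i$ is a $(d-1)$-cycle supported on $K(D)$. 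By (L), $C_1=\partial B_1$ for a $d$-chain $B_1$ supported on the $d$-cells of $D$; since $\supp(Z_1)$ and $\supp(B_1)$ are disjoint and $Z_1\ne 0$, the chain $Z_1-B_1$ is a nonzero $d$-cycle whose support lies properly inside $\supp(Z_d)$ (it avoids the $d$-cells indexed by $V_2,\dots,V_m$), contradicting the simplicity of $Z_d$.

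The substantive work is proving (L), which I would carry out by a double induction, outer on $d$ and inner on $|D|$. The outer base $d=0$ and the inner base $|D|=0$ are trivial. For the inductive step, if $D$ contains no $d$-cell then the $(d-1)$-cells of $K(D)$ are exactly those of $D$, numbering at most $d$; but by Claim~\ref{cl:cycle} any nontrivial $(d-1)$-cycle with compatible support requires at least $d+1$ many $(d-1)$-cells, so the only candidate is $Z=0$. Otherwise pick a $d$-cell $\sigma\in D$ and set $D'=D\setminus\{\sigma\}$. The decomposition $K(D)=K(\sigma)\cup K(D')$ yields, via the evident short exact sequence of cellular chain complexes, a Mayer--Vietoris fragment
\[
H_{d-1}(K(\sigma))\oplus H_{d-1}(K(D'))\ \longrightarrow\ H_{d-1}(K(D))\ \longrightarrow\ H_{d-2}(K(\sigma)\cap K(D')).
\]
By axiom \textbf{A3}, $\partial\sigma$ is the unique (up to scalar) $(d-1)$-cycle on $\Delta\sigma$, so $H_{d-1}(K(\sigma))=0$. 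The inner hypothesis applied to $D'$ (still at most $d$ cells of dimension $\le d$, and $|D'|<|D|$) gives $H_{d-1}(K(D'))=0$. Finally, by axiom \textbf{A1}, $K(\sigma)\cap K(D')=K(\{\sigma\wedge\xi:\xi\in D'\})$, a compatible family (all subcells of $\sigma$) of at most $|D'|\le d-1$ cells, each of dimension at most $d-1$ (since $\xi\ne\sigma$ forces $\sigma\wedge\xi\ne\sigma$), so the outer hypothesis in dimension $d-1$ gives $H_{d-2}(K(\sigma)\cap K(D'))=0$. Hence $H_{d-1}(K(D))=0$, completing the induction.

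The main technical obstacle I anticipate is the legitimacy of the Mayer--Vietoris sequence in the axiomatic cell-complex setting: one needs axiom \textbf{A1} to guarantee that each cell of $K(D)$ lies cleanly in $K(\sigma)$, in $K(D')$, or in the intersection, which gives the short exact sequence of chain complexes; and one tacitly uses that $\supp(\partial\sigma)$ exhausts $\Delta\sigma$, so that $\partial\sigma$ really annihilates all of $H_{d-1}(K(\sigma))$. Both are consequences of the axiomatic structure, but both deserve an explicit verification.
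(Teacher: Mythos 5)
Your proposal is correct and follows essentially the same route as the paper: the same reduction of the theorem to the vanishing statement (L) (the paper's $\mathbf{(II)}_d$), and the same double induction that peels off one $d$-cell $\sigma$, uses axiom \textbf{A1} to identify $K(\sigma)\cap K(D')$ as a compatible family of at most $d-1$ meets of dimension at most $d-1$, invokes \textbf{A3} for the single closed cell, and uses Claim~\ref{cl:cycle} for the base case with no $d$-cells. The only difference is presentational: you package the inductive step as a Mayer--Vietoris fragment (legitimate here, since the required short exact sequence of cellular chain complexes holds verbatim in the axiomatic setting), whereas the paper formulates the equivalent relative-homology statement $\mathbf{(I)}_d$ for a single closed cell and carries out the cancellation explicitly at the chain level.
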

\begin{proof}
To facilitate the discussion, let us formulate, for every $d\geq 1$, the following two statements,
generalizing  (slightly weakened versions of)  Claim~\ref{cl:simplex} and Lemma~\ref{cl:d-cycle-conn},
respectively:
\vspace{0.1cm}\par
${\bf (I)_d}:$ ~{\em Let ${\cal C}_d$ be a closed $d$-cell, and $T$ be
  a set of at most $d-1$ open subcells of ${\cal C}_d$ of dimension
  $<d$. Let $K(T) \subset {\cal C}_d$ be the cell complex that
  corresponds to $T$, then, ~$\tilde{H}_{d-1}({\cal C}_d, K(T)) = 0$.}
\footnote{Meaning that for every relative $(d-1)$-cycle $X_{d-1}$ over ${\cal C}_d$, i.e., a $(d-1)$-chain such that
$\partial X_{d-1}=0$ outside of $K(T)$, there exists a $(d-1)$-boundary $B_{d-1}$ over ${\cal C}_d$, such 
that $B_{d-1}=X_{d-1}$ outside of $K(T)$. In fact, more can be said: due to simplicity of $\partial {\cal C}_d$, 
the boundary $B_{d-1}$ shall always be of the form $c\cdot\partial {\cal C}_d$.}

${\bf (II)_d}:$~ {\em Let $D$ be a compatible set of at most $d$ cells
  of dimension $\leq d$, and $K(D)$ the corresponding cell complex. Then, $\tilde{H}_{d-1}(K(D)) = 0$. }
\vspace{0.1cm}
\par
The argument used in deriving Theorem~\ref{thm:d-cycle-conn} from Lemma~\ref{cl:d-cycle-conn},
carries over to the present setting {\em as is}. Based on axioms {\bf A1} \& {\bf A2}, it shows that 
${\bf (II)_{d}}$ implies our Theorem.

Statement $\bf (II)_d$ will be proven by induction on $d$, with the base case $d=1$,
and two-parts induction step ~${\bf (II)_{d-1}} \Longrightarrow {\bf (I)_{d}}$, and
${\bf (I)_d} \Longrightarrow {\bf (II)_d}$.
\vspace{0.1cm}

The base case $d=1$:~ Axiom  {\bf A3} immediately implies ${\bf (II)_{1}}$.
\vspace{0.1cm}

${\bf (II)_{d-1} }\,\Longrightarrow\, {\bf (I)_{d}}$:~ 
Let ${\cal C}_d$, $T$ and $K(T)$ be as in the premise of ${\bf (I)_{d}}$.
We need to show that any relative $(d-1)$-cycle in $({\cal C}_d,K(T))$ is a relative boundary.
Consider such a relative $(d-1)$-cycle $X_{d-1}$, i.e., $\partial_{d-1}
X_{d-1} \subseteq \Delta(K(T))$. If $\partial_{d-1} X_{d-1} = 0$ there
is nothing to prove. Assume then that $X_{d-1}$ is not a cycle, but
rather just a relative cycle with respect to $K(T)$.
By induction hypothesis, ${\bf (II)_{d-1}}$ holds, implying that $\tilde{H}_{d-2}(K(T)) = 0$. Therefore,
there exists a $(d-1)$-chain $Y_{d-1}$ supported on $K(T)$, such that 
$\partial X_{d-1}  = \partial Y_{d-1}$ (here {\bf A2} is used to conclude that $\partial X_{d-1}$ is a $(d-2)$-cycle in $K(T)$). Then, $X_{d-1} - Y_{d-1}$ is a $(d-1)$-cycle in ${\cal C}_d$. 
However, by {\bf A3}, the only $(d-1)$-cycles in $\C_d$ are of the form 
$c\, \partial {\cal C}^o_d$,\, for some $c\in \F$, hence, $X_{d-1} =
Y_{d-1}  + c\,\partial {\cal C}^o_d$. 
Keeping in mind that $Y_{d-1}$ is supported on $K(T)$, one concludes that $X_{d-1}$ is a relative $d$-boundary. 
\vspace{0.1cm}

${\bf (I)_{d}} \, \Longrightarrow \,{\bf (II)_{d}}$:~ The argument is similar to the proof of 
Lemma~\ref{cl:d-cycle-conn}. Let $D$ be a set of cells as in
the premise of ${\bf (II)_{d}}$. The proof is by induction on the
number of $d$-cells in $D$.   Let $Z_{d-1}$ be
a cycle on $K(D)$. If $D$ contains no $d$-cells,
then $K(D)$ contains at most $|D|\leq d$ different $(d-1)$-cells. However, by Claim~\ref{cl:cycle},
a $(d-1)$-cycle has support of size $\geq d+1$.  Thus, in this case there are no nontrivial $(d-1)$-cycles in $K(D)$.

Hence, there exists a $d$-cell ${\cal C}_d = K({\cal C}^o_d) \in D$,
where ${\cal C}^o_d$ is an open $d$-cell.  Let $D' = D \without\{{\cal C}^o_d\}$. 
Let $T = \{\,\Psi^0 \wedge {\cal C}^o_d\,\}_{\Psi^o \in D'}$, where $\Psi^0 \wedge {\cal C}^o_d$
denotes the maximal element in $\P$ dominated by both $\Psi^0$ and
${\cal C}^o_d$, as in ${\bf A1}$. 
Observe that $T$ is a compatible set, $|T| \leq d-1$, and that the cells in $T$ are of dimension $\leq d-1$.

Applying ${\bf (I)_d}$ to ${\cal C}_d,T$,\, one concludes that
$Z_{d-1}$ is a relative boundary of ${\cal C}_d^0$ with respect to
$K(D')$. Namely, there exists a boundary $B_{d-1}$ of ${\cal C}_d$
such that $Z'_{d-1} = Z_{d-1} - B_{d-1}$ is supported on
$K(D')$. However, $D'$ has one less $d$-cell than $D$, and by
induction $Z'_{d-1} = B'_{d-1}$ for some boundary $B'_{d-1}$ in 
$D'$. Thus, $Z_{d-1} = B_{d-1} + B'_{d-1}$, which is a $(d-1)$-boundary in
$K(D)$.  
%
\end{proof}

To conclude this paper, we would like to close the circle and return to where we have started, the Balinski's Theorem.
For this, we need one more variant of ${\bf (II)_{d}}$. 
\begin{definition}
A cell complex $K$ (in particular, a closed cell) is called {\em homologically $k$-connected} if 
$\tilde{H}_i(K) = 0$ for $i=0,1,\ldots,k$.
\end{definition}
\begin{theorem}
\label{th:alexander}
Let $D$ be a compatible set of at most $d$ homologically $(d-1)$-connected cells of {\em any} dimension.\\
Then, ~$\tilde{H}_{d-1}(K(D))= 0$.
\end{theorem}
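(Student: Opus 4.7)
The plan is to adapt the induction structure of Theorem~\ref{th:general} to the cell complex setting, with the assumed $(d-1)$-connectivity of the cells in $D$ playing the role of the automatic contractibility of simplices. I would induct on $d$, with a nested induction on $|D|$. The base case $d=1$ is immediate: $|D|\leq 1$, and a single $0$-connected cell has $\tilde H_0=0$.

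For the inductive step, pick any $\mathcal{C}^o\in D$ (of arbitrary dimension), set $D'=D\setminus\{\mathcal{C}^o\}$, and form the family of meets $T=\{\Psi^o\wedge\mathcal{C}^o:\Psi^o\in D'\}$ inside $\mathcal{C}=K(\mathcal{C}^o)$; by axiom A1 these meets are well-defined single cells, $T$ is compatible, and $|T|\leq d-1$. The principal tool is the Mayer--Vietoris exact sequence for the subcomplex decomposition $K(D)=K(D')\cup\mathcal{C}$ with intersection $K(D')\cap\mathcal{C}=K(T)$:
\[
\tilde H_{d-1}(K(D'))\oplus\tilde H_{d-1}(\mathcal{C})\;\longrightarrow\;\tilde H_{d-1}(K(D))\;\longrightarrow\;\tilde H_{d-2}(K(T))\;\longrightarrow\;\tilde H_{d-2}(K(D'))\oplus\tilde H_{d-2}(\mathcal{C}).
\]
By hypothesis, $\tilde H_{d-1}(\mathcal{C})=\tilde H_{d-2}(\mathcal{C})=0$. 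The nested induction on $|D|$ gives $\tilde H_{d-1}(K(D'))=0$, and the outer inductive hypothesis at level $d-1$ applied to $D'$ (which has at most $d-1$ cells, each $(d-2)$-connected since it is $(d-1)$-connected) gives $\tilde H_{d-2}(K(D'))=0$. The sequence therefore collapses to an isomorphism $\tilde H_{d-1}(K(D))\cong\tilde H_{d-2}(K(T))$, reducing the theorem to showing $\tilde H_{d-2}(K(T))=0$.

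The main obstacle is this last step. Since $T$ is compatible with $|T|\leq d-1$, the outer inductive hypothesis at level $d-1$ would yield $\tilde H_{d-2}(K(T))=0$ immediately, provided each meet cell $\Psi^o\wedge\mathcal{C}^o$ is itself homologically $(d-2)$-connected. In the motivating simplicial and polytopal examples all closed cells are contractible, so this holds for free. In the purely axiomatic setting of A1--A3, however, A3 only forces the top homology of each cell to vanish, and the lower homology of meet cells is not directly controlled. To close this gap one would either impose a standard regularity assumption (every closed cell in $\P$ is homologically connected below its own dimension, which holds in all natural examples), or derive this property by a descending induction on cell dimension using A1 and A3 to propagate vanishing of homology through the lattice of subcells. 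Once the connectivity of the meet cells is secured, the induction closes.
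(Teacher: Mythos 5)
Your argument is, at bottom, the same as the paper's. The paper runs a double induction (outer on $d$, inner on the number of high-dimensional cells of $D$) through two explicit statements: ${\bf (I^*)_d}$, asserting the vanishing of the relative homology $\tilde{H}_{d-1}(\mathcal{C},K(T))$ for a closed cell $\mathcal{C}$ and a set $T$ of at most $d-1$ subcells, and ${\bf (II^*)_d}$, which is the theorem itself. Your Mayer--Vietoris sequence for the decomposition $K(D)=K(D')\cup\mathcal{C}$ with intersection $K(T)$ (the meets supplied by axiom A1) is exactly the long exact sequence of the pair $(\mathcal{C},K(T))$ combined with the excision step that the paper carries out by hand, so the two proofs differ only in packaging. (A minor remark: for the conclusion you only need injectivity of $\tilde{H}_{d-1}(K(D))\to\tilde{H}_{d-2}(K(T))$, so the vanishing of $\tilde{H}_{d-2}(K(D'))$ is not actually required.)

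The obstacle you flag at the end is genuine, and you should be aware that the paper's own proof does not resolve it either: when the paper invokes ${\bf (II^*)_{d-1}}$ on the set $T$ of meets $\Psi^o\wedge\mathcal{C}^o$ to conclude $\tilde{H}_{d-2}(K(T))=0$, it tacitly assumes that these meet cells are themselves homologically $(d-2)$-connected, which follows neither from the stated hypothesis (connectivity is assumed only for the cells of $D$) nor from axioms A1--A3. In the one place the theorem is applied (Claim~\ref{cl:mixed-con}), the cells are faces of convex polytopes, so every subcell is contractible and the issue evaporates; your proposed fix of adding a hereditary regularity assumption on the cells of $\mathcal{P}$ is precisely what is implicitly being used there. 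In short, your write-up is not less complete than the paper's --- it is more explicit about the hypothesis that the induction actually consumes.
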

\begin{proof}
The proof is by induction on $d$, and it is almost identical to the inductive argument of the previous theorem.
Interestingly, the axiom {\bf A3} is not needed this time. We are concerned only with the modified ${\bf (I^*)_{d}}$ and ${\bf (II^*)_{d}}$, 
where the modification consists of dropping any assumptions about the dimension of the cells in $T$ and $D$, but preserving the conditions $|T|\leq d-1$, and $|D|\leq d$ respectively.  Also, in ${\bf (I^*)_{d}}$, the cell $\C$ is assumed to be of 
dimension $\geq d$.

The basis, ${\bf (II^*)_{1}}$, follows directly from the assumptions of the Theorem.

The implication ~${\bf (II^*)_{d-1} }\,\Longrightarrow\, {\bf
  (I^*)_{d}}$~ works just like in Theorem~\ref{th:general}, with the
following sole 
change:  the conclusion that the $(d-1)$-cycle $X_{d-1} - Y_{d-1}$ on ${\cal C}$ is a $(d-1)$-boundary, is now derived from the 
assumption of the Theorem that $\tilde{H}_{d-1}({\cal C})=0$.

The implication ~${\bf (I^*)_{d} }\,\Longrightarrow\, {\bf
  (II^*)_{d}}$~ still works, with induction on the number of cells in
$D$ of dimension at least $d$. 
\end{proof}

We would like to show that Theorem~\ref{th:alexander} is in fact equivalent (via Alexander duality) to the following elegant 
generalization of Balinski's Theorem due to~\cite{Floy}, further strengthened by A.~Bjorner in~\cite{BjSlides}. 
\begin{theorem}
\label{th:mixed-con} {\bf (\,Homological Mixed-Connectivity Theorem\,)} ~The boundary complex $B =\Delta P$ of a convex 
$(d+1)$-polytope remains homologically $r$-connected, with non-empty $r$-skeleton, after removal
\footnote{
Removing open faces means removing the faces themselves and their super-faces, but not their subfaces.
}
of any set $F,~|F| \leq d-r$ of its {\em open} faces, for $r=0,\ldots,d-1$. 
\end{theorem}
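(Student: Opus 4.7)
The plan is to derive Theorem~\ref{th:mixed-con} from Theorem~\ref{th:alexander} via combinatorial Alexander duality in $\partial P\cong S^d$; the opposite implication runs along the same dictionary in reverse. Fix a convex $(d+1)$-polytope $P$ with dual $P^*$, and write $g\mapsto g^*$ for the inclusion-reversing isomorphism between the face lattices of $P$ and $P^*$. For a set $F$ of open faces of $P$, put $F^*=\{f^*:f\in F\}$ and let $K(F^*)$ be the subcomplex of $\partial P^*$ generated by these closed faces. The boundary complexes of $P$ and $P^*$ are mutually dual regular CW decompositions of $S^d$, so the classical dual-cell deformation retraction identifies $S^d\setminus K(F^*)$ with the subcomplex of $\partial P$ consisting of those closed faces $g$ whose dual $g^*$ is not a face of any $f^*\in F^*$. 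By inclusion-reversal this is exactly the subcomplex of faces $g$ of $P$ with $g\not\ge f$ for every $f\in F$, i.e., $B\setminus F$ in the footnote's sense. Hence $B\setminus F\simeq S^d\setminus K(F^*)$.

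Next, I invoke Alexander duality in $S^d$, which over a field reads
\[
\tilde H_i(B\setminus F)\;\cong\;\tilde H^{d-i-1}(K(F^*))\;\cong\;\tilde H_{d-i-1}(K(F^*)).
\]
Setting $k=|F|=|F^*|\le d-r$, the desired homological $r$-connectivity of $B\setminus F$ is equivalent to $\tilde H_j(K(F^*))=0$ for every $j$ with $d-r-1\le j\le d-1$, and in particular for every $j\ge k-1$.

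For each such $j$ I apply Theorem~\ref{th:alexander} with parameter $d':=j+1\ge k$ to the set $F^*$. The closed cells of $P^*$ are convex polytopes, hence contractible, hence homologically $m$-connected for every $m\ge 0$; $F^*$ is compatible within the cell complex of $P^*$, since all of its members are subfaces of the top cell $P^*$; and $|F^*|\le d'$. Theorem~\ref{th:alexander} then yields $\tilde H_{d'-1}(K(F^*))=\tilde H_j(K(F^*))=0$, which completes the homological part. The non-emptiness of the $r$-skeleton of $B\setminus F$ is a direct combinatorial check: it suffices to exhibit an $r$-face of $P$ containing no $f\in F$ as a subface, and easy counting (or the lower bound theorem) handles the regime $|F|\le d-r$.

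The main obstacle, I expect, is the clean setup of the duality dictionary $B\setminus F\leftrightarrow S^d\setminus K(F^*)$ for general (non-simplicial) convex polytopes, together with the applicability of topological Alexander duality to this class of subspaces. Once one accepts that $\partial P$ and $\partial P^*$ are dual regular CW structures on $S^d$ whose closed cells are balls, the retraction and the duality are standard; the remaining substance of the argument lies in the deployment of Theorem~\ref{th:alexander} with varying parameter $d'=j+1$, which makes essential use of the \emph{cells of any dimension} clause in that theorem.
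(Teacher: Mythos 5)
Your proposal follows essentially the same route as the paper: identify $B\setminus U(F)$ (up to homotopy) with the complement of $K(F^*)$ in $S^d$ via the dual cell structure, apply combinatorial Alexander duality over a field to reduce the vanishing of $\tilde H_i(B\setminus U(F))$ to that of $\tilde H_{d-i-1}(K(F^*))$, and invoke Theorem~\ref{th:alexander} with the parameter $d'=j+1$ varying over the relevant degrees, using contractibility and compatibility of the dual faces --- exactly the paper's argument (and slightly more explicit about the varying parameter). The one point where you diverge is the non-emptiness of the $r$-skeleton, which you dispatch by an unexplained ``easy counting'' (note that a single face of $F$ can be contained in arbitrarily many $r$-faces, so a naive count is not obviously sufficient), whereas the paper argues by induction, exhibiting a nonempty cycle in the link of each removed face that shrinks in dimension by at most one per removal and survives as an $r$-cycle.
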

Clearly, $B$ is homologically $k$-connected if and only if so is its $(k+1)$-skeleton, the subcomplex of $B$ obtained by retaining 
only the faces of dimension $\leq k+1$. Thus, the case $r=0$ of the the Mixed-Connectivity Theorem is the Balinski's Theorem.
%
%
\begin{claim}
\label{cl:mixed-con} For cells corresponding to convex polytopes,  
~${\bf (II^*)}$ ~~$\,\Longleftrightarrow\,$~~ The Homological Mixed-Connectivity Theorem.
\end{claim}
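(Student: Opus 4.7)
The plan is to derive the equivalence directly from combinatorial Alexander duality on the boundary sphere of a convex polytope. Throughout, every closed cell is a convex polytope, hence contractible (so homologically $k$-connected for all $k$), and every compatible set of cells is realised inside a common polytope.

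The backbone of both directions is the following duality. For a convex $(d+1)$-polytope $P$ with polar dual $P^*$, the bijection $f \leftrightarrow f^*$ between non-empty proper faces reverses inclusion and sends $k$-faces to $(d-k)$-faces. For a set $F$ of open faces of $P$, let $K(F^*) \subset \partial P^*$ denote the subcomplex spanned by the closed dual cells $\overline{f^*}$, $f\in F$. The key technical statement is
\[
\tilde H_i\bigl(\partial P \setminus F\bigr) ~\cong~ \tilde H_{d-1-i}\bigl(K(F^*)\bigr) \qquad \text{over } \F,
\]
where $\partial P \setminus F$ is the antistar of $F$, i.e.\ the subcomplex obtained by the face removal of the theorem. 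I would establish this by noting that the open stars of $f\in F$ form an open cover of $\partial P \setminus (\partial P \setminus F)$ whose nerve is (a subdivision of) $K(F^*)$, and then invoking the standard combinatorial/CW version of Alexander duality on $\partial P \cong S^d$.

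Given this, the direction MCT $\Longrightarrow \mathbf{(II^*)}$ runs as follows. Let $D$ be a compatible set of at most $d$ cells, all contained in a common closed polytope $Q$ of dimension $\ell$. Pass to the dual polytope $Q^*$ and consider $F = \{f^* : f \in D\}$ as a set of open faces of $Q^*$ with $|F| = |D| \leq d$. Applying MCT to $Q^*$ gives $\tilde H_r(\partial Q^* \setminus F) = 0$ for every $r \leq \ell - 1 - |F|$. Specialising to $r = \ell - d - 1$ (which satisfies the constraint since $|F|\leq d$) and invoking the duality above, one obtains $\tilde H_{d-1}(K(D)) = 0$, which is exactly $\mathbf{(II^*)}$.

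The reverse direction $\mathbf{(II^*)} \Longrightarrow$ MCT proceeds symmetrically. For $P$ a convex $(d+1)$-polytope and $F$ a set of open faces with $|F|\leq d-r$, set $D = \{f^* : f\in F\}$. Then $D$ is compatible (inside $P^*$), consists of contractible cells, and has size at most $d-r$. For each $i \leq r$, apply $\mathbf{(II^*)}$ with parameter $d' := d-i$: since $|D|\leq d-r\leq d-i=d'$ and each cell in $D$ is $(d'-1)$-connected, the hypotheses are met and the conclusion $\tilde H_{d'-1}(K(D))=0$ translates via the duality above into $\tilde H_i(\partial P\setminus F)=0$. The non-emptiness of the $r$-skeleton is a trivial face-count.

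I expect the one genuine obstacle to be the justification of the Alexander duality formula quoted above. In the simplicial-Alexander-dual setting this is classical, but here the removed elements are open faces of arbitrary dimension, so the nerve-of-open-stars reduction and the identification of this nerve with $K(F^*)$ via the dual cell decomposition of $\partial P$ must be carried out carefully; every subsequent index manipulation is then just arithmetic.
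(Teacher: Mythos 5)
Your core argument is the same as the paper's: pass to the polar dual, identify the homology of the antistar $B \setminus U(F)$ with that of $K(F^*)$ via combinatorial Alexander duality on $\partial P \cong S^d$, namely $\tilde H_i(B\setminus U(F)) \cong \tilde H_{d-1-i}(K(F^*))$, and then match indices (applying ${\bf (II^*)}$ with parameter $d'=d-i$ for each $i\le r$, and conversely reading off $\tilde H_{d-1}(K(D))=0$ from homological $(\ell-d-1)$-connectivity of $\partial Q^*$). The index arithmetic checks out. The one presentational difference is that the paper simply cites Bj\"orner--Tancer for the duality isomorphism, whereas you propose to re-derive it from the nerve of the open stars of $F$; that derivation is plausible (both the open stars and the closed dual cells form covers with contractible nonempty intersections), but it is precisely the piece you would have to write out carefully, and citing it is the shorter route. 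Also note that in your forward direction the specialisation $r=\ell-d-1$ presupposes $\ell\ge d+1$; when the ambient polytope has dimension $\ell\le d$ one needs the separate (easy) observation that $K(D)$ is then either a single contractible cell or a proper subcomplex of $\partial Q\cong S^{\ell-1}$, so $\tilde H_{d-1}(K(D))=0$ anyway.

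There is one genuine gap: you dismiss the clause ``with non-empty $r$-skeleton'' as a trivial face-count, and it is not one. A single vertex of a $(d+1)$-polytope can be contained in arbitrarily many $r$-faces (think of the apex of a pyramid over a polytope with many faces), so no argument comparing $f_r(P)$ with the number of $r$-faces destroyed per removed open face can succeed. The paper closes this with a short separate induction: the worst case is when $F$ consists of $d-r$ vertices; the link of the first removed vertex carries a nonempty $(d-1)$-cycle avoiding it, and each subsequent removal either misses the surviving cycle or (again via a link) drops its dimension by one, so after all $d-r$ removals a nonempty cycle of dimension at least $r$, and hence a nonempty $r$-skeleton, survives. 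Some argument of this kind is needed to complete the equivalence with the Mixed-Connectivity Theorem as stated.
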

\begin{proof} 
Formally, the Homological Mixed Connectivity Theorem is about vanishing of the lower homology groups of the cell 
complex $B \without U(F)$, where $U(F)$ (not a complex!) is the {\em upper} closure of $F$ in $B$ with respect to containment. 
Let $P^*$ be the dual polytope of $P$, and, respectively, let $F^*$ be the set faces dual to $F$ in $P^*$. Then, by Combinatorial 
Alexander duality for polytopes (see e.g.,~\cite{bjorner-tancer}
\footnote{
The point of~\cite{bjorner-tancer} is to provide a self-contained exposition based on the first principles.
Alternatively, the Combinatorial Alexander duality for polytopes can be derived from the topological Alexander duality,
see, e.g.,~\cite{Mu}, after providing a geometric argument that $|B \,\without\, U(F)|$, the geometric realization
of $B \,\without\, U(F)$, is homotopy equivalent to $|B^* \without K(F^*)|$.
}, in particular the discussion towards the end of the Introduction section, and the reference therein),
\[
\tilde{H}_r (B \,\without\, U(F))  ~\cong~ \tilde{H}^{d-r-1} (K(F^*))\,.
\]
Since the cohomology groups are isomorphic to the homology groups, \;$\tilde{H}^{d-r-1} (K(F^*)) \cong \tilde{H}_{d-r-1} (K(F^*))$.
Therefore, the cell complex $B \,\without\, U(F)$ has a vanishing $r$'th homology group if and only if $\tilde{H}_{d-r-1} (K(F^*))=0$. 
As the cells in $F^*$  satisfy the assumptions of Theorem~\ref{th:alexander} for any $d$, and $|F^*| = |F| \leq d-r$, this is precisely the statement of Theorem~\ref{th:alexander}. Thus, Theorem~\ref{th:alexander} implies Theorem~\ref{th:mixed-con}.

Observing that the argument is completely reversible, provided that the set $F$ is a set of faces of some polytope $P$, and that $F$
indeed satisfies this condition due to the compatibility assumption of  of Theorem~\ref{th:alexander}, the reverse implication follows as well. 

\vspace{0.1cm}
The fact that the $r$-skeleton of $B \without U(F)$ is not empty, can be shown by induction. Clearly, the most 
"destructive" set $F$ is the set of $d-r$ $0$-cells, i.e., points. Consider such $F$, and remove its points from $B$ one by one.
The link of the first point $p_1$ is a nonempty $(d-1)$-cycle $Z_{d-1}$ in the cell complex $B$, and it survives the removal of $p_1$.
Similarly, the removal of the second point in $F$ either misses $Z_d$, or reduces it to a nonempty $(d-2)$-cycle in $B$, etc. 
After the removal of the entire $F$ from $B$, a nonempty $(d-r)$-cycle $Z_{d-r}$ survives.   
\end{proof}
\\ \\
%
%
{\bf\large Acknowledgments:}~~ We are grateful to Roy Meshulam and Eran Nevo for enlightening discussions.  
\bibliographystyle{plain}

\section*{Appendix A: Proof of Claim~\ref{cl:coboundary}}
\label{ap:hyper}
The statement is that ~~$(\partial_{k-1} C)^* = \delta^{r-1}\, C^*$\,.
\\
\begin{proof}
By linearity of all the involved operators, it suffices to verify the claim for $(k-1)$-simplices.
The basic identity behind the Claim is:
\begin{equation}\label{eq:switch}
\sign(\sigma,\sigma \without p) ~~=~~ \sign(\bar{\sigma} \cup p, \bar{\sigma})\cdot (-1)^{p-1}\,,
\end{equation}
where $p\in \sigma$, and both $\sigma \without p\;$ and $\bar{\sigma} \cup p\;$ denote, with 
some abuse of notation, signed simplices ordered in the increasing order.

To verify $(\ref{eq:switch})$, assume that $p$ is the $i$'th element in $\sigma$. Then, by definition, 
$\sign(\sigma,\sigma \without p) = (-1)^{i-1}$. On the other hand, the order of 
$p$ in $\bar{\sigma} \cup p$ must be $p-i+1$, thus 
$\sign(\bar{\sigma} \cup p, \bar{\sigma}) = (-1)^{p-i}$, and (\ref{eq:switch}) follows.

The next identity is an immediate consequence of (\ref{eq:switch}):
\begin{equation}\label{eq:transform}
s(\sigma) \cdot \sign(\sigma,\sigma \without p) \cdot s(\sigma \without p) ~~=~~ \sign(\bar{\sigma} \cup p, \bar{\sigma})~.
\end{equation}
We can now establish the Claim.
\[
(\partial \sigma)^* ~=~  \left( \sum_{p\in \sigma} \sign(\sigma,\sigma\without p) \cdot (\sigma\without p) \right)^* ~=~  \sum_{p \not\in \bar{\sigma}} \sign(\sigma,\sigma\without p) \cdot [s(\sigma\without p) \cdot (\bar{\sigma} \cup p)] ~=~ 
\]
\[ =~
s(\sigma) \cdot \sum_{p \not\in \bar{\sigma}} [s(\sigma) \cdot \sign(\sigma,\sigma\without p) \cdot s(\sigma\without p)] \cdot (\bar{\sigma} \cup p) ~=~ s(\sigma) \cdot \sum_{p \not\in \bar{\sigma}} \sign((\bar{\sigma} \cup p), \bar{\sigma}) \cdot (\bar{\sigma} \cup p) ~=~ s(\sigma)\cdot \delta(\bar\sigma)
~=~ \delta(\sigma^*)~,
\]
where the fourth equality follows from (\ref{eq:transform}).
\end{proof}
\section* {Appendix B:~ Connectivity of $G_d(K_n^d)$}  
\label{sec:kn}
The facet graph of $G_d(K_n^d)$, to be denoted by $G(n,d+1)$, is the graph whose vertices correspond to
$(d+1)$-subsets of $[n]$, and a a pair of vertices forms an edge if the symmetric difference between the
corresponding sets is of size $2$.
\begin{theorem}\label{thm:kn}
For all pairs $n,d$, where $n>d+1$, and  $d>0$, the graph $G(n,d+1)$ is ~$(d+1)(n-d-1)$ connected.
\end{theorem}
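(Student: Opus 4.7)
The plan is to prove the claim by induction on $n$. Observe first that $G(n,d+1)$ is vertex-transitive and regular of degree $(d+1)(n-d-1)$: each vertex $A$ has neighbors obtained by swapping one of its $d+1$ elements with one of the $n-d-1$ elements outside $A$. Hence $(d+1)(n-d-1)$ is an upper bound on the connectivity, and the task is matching it. The base case is $n=d+2$, where complementation gives $G(d+2,d+1)\cong K_{d+2}$ (the vertices are complements of singletons in $[d+2]$, and any two are at Hamming distance $2$), yielding $(d+1)$-connectivity, which agrees with $(d+1)(n-d-1)=d+1$.

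For the inductive step, fix the element $n$ and partition the vertex set as $V_+=\{A\,:\,n\in A\}$ and $V_-=\{A\,:\,n\notin A\}$. The natural identifications show $V_+\cong G(n-1,d)$ and $V_-\cong G(n-1,d+1)$; by the inductive hypothesis, these are $d(n-d-1)$-connected and $(d+1)(n-d-2)$-connected respectively (with a separate appeal to the base case when $n=d+3$ makes $V_-$ equal to $G(d+2,d+1)$). The bipartite graph between $V_+$ and $V_-$ is biregular: every $A\in V_+$ has exactly $n-d-1$ bipartite neighbors (swap $n$ for any $x\in[n-1]\setminus A$), and every $B\in V_-$ has exactly $d+1$ bipartite neighbors (swap any $y\in B$ for $n$).

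Now suppose for contradiction that $S$ is a vertex cut of $G(n,d+1)$ with $|S|<(d+1)(n-d-1)$, and write $S_+=S\cap V_+$, $S_-=S\cap V_-$. Case one: $|S_+|<d(n-d-1)$ and $|S_-|<(d+1)(n-d-2)$. Then both $V_+\setminus S_+$ and $V_-\setminus S_-$ are connected by induction, so it suffices to find an undeleted bipartite edge between them; but blocking every bipartite edge incident to $V_+\setminus S_+$ would force $(|V_+|-|S_+|)(n-d-1)\le |S_-|(d+1)$, contradicting $|S_-|<(d+1)(n-d-1)$ once $|V_+|=\binom{n-1}{d}$ is exploited. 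Case two: $|S_+|\ge d(n-d-1)$, forcing $|S_-|<n-d-1$; then $V_-\setminus S_-$ is still connected by induction, and each connected component of $V_+\setminus S_+$ must have all its $(n-d-1)$-per-vertex bipartite neighbors swallowed by $S_-$, which is impossible given $|S_-|<n-d-1$ unless the component is empty. Case three is symmetric, with $|S_-|\ge(d+1)(n-d-2)$ forcing $|S_+|<d+1$.

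The main obstacle will be the edge counting when $V_+\setminus S_+$ or $V_-\setminus S_-$ splits into several components: one must argue that each individual component has enough bipartite neighbors to avoid being swallowed by $S$ on the opposite side, which requires a careful per-component vs.\ global double-count. A secondary subtlety is the transition at $n=d+3$, where the inductive hypothesis for $V_-$ degenerates to the base case and must be invoked by hand. An alternative route worth keeping in reserve is a direct Menger-style construction: for adjacent $A,B$ one can exhibit $(d+1)(n-d-1)$ internally vertex-disjoint paths explicitly using ``double-swap'' moves through elements of $A\cap B$ and of $[n]\setminus(A\cup B)$ (a count already checked in the analogous construction for cycles in the paper), and extend to general $A,B$ by induction on the Johnson distance $|A\setminus B|$.
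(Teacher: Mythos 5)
Your skeleton is the same as the paper's: partition the vertices of $G(n,d+1)$ by whether they contain $n$, identify the two parts with $G(n-1,d)$ and $G(n-1,d+1)$, and induct. Your Cases two and three are sound (they correspond to the paper's easy case, where one side of the cut is so small that every surviving vertex on the other side trivially keeps a bipartite neighbour). The problem is Case one, which is where all the work of this theorem actually lives, and your proposed double count does not close it. The inequality $(|V_+|-|S_+|)(n-d-1)\le |S_-|(d+1)$ is a correct necessary condition for every bipartite edge leaving $V_+\setminus S_+$ to be blocked, but it is \emph{consistent} with the Case-one hypotheses. Concretely, take $d=1$, $n=10$, $|S_+|=7<d(n-d-1)=8$, $|S_-|=8<(d+1)(n-d-2)=14$, so $|S|=15<16$. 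Then $|V_+\setminus S_+|=2$ and your inequality reads $2\cdot 8\le 8\cdot 2$, i.e.\ $16\le 16$: no contradiction. The count is loose because it charges every vertex of $S_-$ with absorbing $d+1$ blocked edges, whereas a vertex of $V_-$ is adjacent to two given vertices of $V_+$ only in very restricted circumstances (in the example, only $\{1,2\}$ is adjacent to both survivors $\{1,10\}$ and $\{2,10\}$; every other deleted vertex absorbs at most one edge).

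What actually rescues this case is the overlap structure of the bipartite neighbourhoods: the sets $N_-(A)$, $A\in V_+$, pairwise intersect in at most one vertex, so the union of the neighbourhoods of the survivors is nearly $|V_+\setminus S_+|\cdot(n-d-1)$, and one must play this off against the total budget $|S_+|+|S_-|<(d+1)(n-d-1)$ (not against $|S_-|$ alone). This inclusion--exclusion is exactly the Claim in the paper's Appendix~B proof (the sets $A_0(\tau)$ with $|A_0(\tau)|=n-d-2$ and $|A_0(\tau)\cap A_0(\tau')|\le 1$), leading to the inequality $|X|\ge (d+1)(n-d-1)+\sum_i\bigl(r_i(n-d-3)-\binom{r_i}{2}\bigr)$; the paper also organizes the cases so that only the $G(n-1,d+1)$ side needs to stay connected, routing each surviving vertex of the other side into it by a path of length at most two. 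Without some version of this refined count your Case one has a genuine hole. Two smaller points: your induction also consumes the row $d=0$ (since $V_+\cong G(n-1,d)$), so you need the trivial base $G(m,1)\cong K_m$ in addition to $n=d+2$; and the ``alternative route'' via explicit Menger paths is only sketched, so it cannot be credited as a proof.
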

\begin{proof}
The proof is by induction on the pairs $(n,d)$. Menger's Theorem will be used throughout.

For the base case $d=1$, $G(n,d+1)$ is isomorphic to the line graph of
$K_n$, which is easily verified to be $2(n-2)$ connected.
Observe also that the statement is correct for $n \leq d+3$: For
$n=d+2,$ $G_d(d+2,d+1)$ is just 
a $d+2$ clique.  
The case $n=d+3$ reduces to the case $d=1$, since two $(d+1)$-sets in $[d+3]$ are adjacent if and only if their complements,
i.e., sets of size $2$, are adjacent.  Hence we assume in what follows that $n \geq d+4$.

Separating the vertex set $V$ of $G(n,d+1)$ to $V_0$, corresponding to $(d+1)$-sets not containing $n$, and $V_1$, the rest,
we observe that $G_0 = G(n,d+1)|_{V_0}$ is isomorphic to $G(n-1,d+1)$, while $G_1 = G(n,d+1)|_{V_1}$ is isomorphic to $G(n-1,d)$.

Let $X$ be a subset of vertices of $V$, with $|X| < (d+1)(n-d-1)$.  We show that $G(n,d+1) \without X$ is connected.
\\ \\
\indent
{\bf Case 1:}~~ $|X \cap V_0| < (d+1)(n-d-2)$. \\
By induction assumption, in this case $G_0 \without X$ is connected.
Thus, either $V_1 \without X   = \emptyset$, in which case we are done, or $V_1 \without X \neq \emptyset$.
In the latter case, it suffices to show that for every $\sigma \in V_1
\without X$, there exists a path in $G\setminus X$
from $\sigma$ to a member of $V_0 \without X$.

For any subset $S \subset V_1$ let $N_i(S), ~ i=0,1$ contain all neighbours of
of $S$ in $V_i$ respectively. Note that for any $\tau \in V_1$, $|N_0(\tau)| = (n-d-1)$ and
$|N_1(\tau)| = d(n-d-1)$.

Consider $\sigma \in V_1 \setminus X$, and assume w.l.o.g., that $\sigma\without \{n\} = [d]$. 
Write $N_1(\sigma) = \cup_{j=1}^d N_{1}^j(\sigma)$, where 
$N_{1}^j (\sigma) = \{ \tau \in N_1(\sigma) | ~ j \notin \tau
\}$. Write also $A_0(\tau) = N_0(\tau) \setminus N_0(\sigma)$.
The following is a a simple observation that we will use.
\begin{claim}
 Let $\tau \in N^i_1(\sigma), \;\tau' \in N_1^j(\sigma)$. Then,  $|A_0(\tau)| = n-d-2$. \\
Furthermore, if  $i=j$, then $|A_0(\tau) \cap A_0(\tau')| = 1$, and if
$i \neq j$, then $|A_0(\tau) \cap A_0(\tau')| = 0$. $\qed$
\end{claim}
Let $r_i = |N_1^i(\sigma) \without X|$, for $i=1, \ldots d$. 

If $N_0(\sigma)$ or any of $N_0(N_1^i(\sigma) \setminus X)$ contains a member in $V_0\without X$, we are done. Otherwise,
\[
|X \cap V_1|~~\geq~~ | \cup_{i=1}^d \,N_1^i(\sigma) \cap X| ~~=~~ d(n-d-1) \,-\, \sum_1^d  r_i\,.
\]
On the other hand, in view of the Claim above,
\[
|X \cap V_0|~~\geq~~ |\,N_0(\sigma)~~\;\cup\;~~ \cup_{i=1}^d 
N_0 (N_1^i(\sigma) \setminus X) ~ \cap X\,|  =   |\,N_0(\sigma)| +
\sum_{i=1}^d ~~|\cup_{\tau \in \;N_{1}^i (\sigma) \without X} \,A_0(\tau)\,| 
~~\geq~~  \] \[(n-d-1) \,+\, \sum_1^d r_i(n-d-2) \,-\, \sum_1^d {r_i \choose 2}~.
\]
Combining the two estimations, one gets
\begin{equation}
\label{eq:last}
|X| ~~\geq~~ (d+1)(n-d-1) \;+\; \sum_i \left( r_i(n-d-3) - {r_i \choose 2} \right)\,.
\end{equation}
Since for $n \geq d+4$, it holds that~ $(n-d-3) \geq {{n-d-2} \over 2} \geq {{r_i -1}\over 2}$, the last term in~(\ref{eq:last})
is nonnegative, and thus  $|X|~\geq~ (d+1)(n-d-1)$, contradicting the assumption of Case 1. 
\\ \\
\indent
{\bf Case 2:}~~ $|X \cap V_0| ~\geq~ (d+1)(n-d-2)$, or, equivalently, ~$|X \cap V_1| ~<~ d+1$\,. \\
Since \;$d+1 < d(n-d-1)$, the graph $G_1 \without X$ is connected by the induction hypothesis.
Thus, to establish the connectivity of $G(n,d+1) \without X$ it suffices to
show that every $\sigma \in V_0 \without X$ has a
neighbour in $V_1 \without X$. Since $\sigma$ has $d+1$ neighbours in
$V_1$, and $|X \cap V_1| < d+1$ by the assumption of Case 2, the
implication follows.

This completes the proof of the statement.
\end{proof}
\end{document}